\newtheorem{thm}{Theorem}[section]
\newtheorem{lem}[thm]{Lemma}
\newtheorem{cor}[thm]{Corollary}
\newtheorem{rmk}[thm]{Remark}
\newtheorem{thm-con}[thm]{Theorem-Conjecture}
\numberwithin{equation}{section}
\theoremstyle{definition}
\newtheorem{defn}[thm]{Definition}
\newcommand{\f}{\Bbb F}
\begin{document}

\title[Self-Reciprocal Polynomials and Coterm Polynomials]{Self-Reciprocal Polynomials and Coterm Polynomials}

\author[Neranga Fernando]{Neranga Fernando}
\address{Department of Mathematics,
Northeastern University, Boston, MA 02115}
\email{w.fernando@neu.edu}

\begin{abstract}
We classify all self-reciprocal polynomials arising from reversed Dickson polynomials over $\mathbb{Z}$ and $\f_p$, where $p$ is prime. As a consequence, we also obtain coterm polynomials arising from reversed Dickson polynomials. 
\end{abstract}

\keywords{Self-reciprocal polynomial, Coterm polynomial, Reversed Dickson polynomial}

\subjclass[2010]{11T06, 11C08, 11T55}

\maketitle

\section{Introduction}

The reciprocal $f^*(x)$ of a polynomial $f(x)$ of degree $n$ is defined by $f^*(x)=x^n\,f(\frac{1}{x})$. A polynomial $f(x)$ is called {\it self-reciprocal} if $f^*(x)=f(x)$, i.e. if $f(x)=a_0+a_1x+a_2x^2+\cdots +a_nx^n$, $a_n\neq 0$, is self-reciprocal, then $a_i=a_{n-i}$ for $0\leq i\leq n$. Self-reciprocal polynomials have important applications in coding theory. We briefly explain two applications in the next two paragraphs. 

Let $C$ be a code of length $n$ over $R$, where $R$ is either a ring or a field. Consider the codeword $c=(c_0,c_1,\ldots ,c_{n-2}, c_{n-1})$ in $C$, and denote its reverse by $c^r$ which is given by $c^r=(c_{n-1},c_{n-2},\ldots ,c_1, c_0)$. A code $C$ is  defined to be reversible if $c^r\in C$ for all $c\in C$. If $\tau$ denotes the cyclic shift, then $\tau(c)=(c_{n-1},c_0,\ldots ,c_{n-2})$. A code $C$ is said to be a {\it cyclic code} if the cyclic shift of each codeword is also a codeword. Cyclic codes have a representation in terms of polynomials. For example, the codeword $c=(c_0,c_1,\ldots ,c_{n-1})$ can be represented by the polynomial $f(x)=c_0+c_1x+\cdots c_{n-1}x^{n-1}$ and the cyclic shifts of $c$ correspond to the polynomials $x^if(x)\pmod{x^n-1}$ for $i=0,1, \ldots , n-1$. Among all non-zero codewords in a cyclic code C, there is a unique codeword whose corresponding polynomial $g(x)$ has minimum degree and divides $x^n-1$. The polynomial $g(x)$ is called the generator polynomial of the cyclic code $C$. In \cite{Massey-1964}, Massey studied reversible codes over finite fields and showed that the cyclic code generated by the monic polynomial $g(x)$ is reversible if and only if $g(x)$ is self-reciprocal. 

Deoxyribonucleic acid (DNA) is a molecule that contains all of the information necessary to build and maintain an organism. DNA computing was first introduced by Leonard Adleman when he solved the famous directed Hamiltonian path problem by using DNA molecules as a form of computation; see \cite{Adleman-1994}. Cyclic codes have played a pivotal role in the area of error-correcting codes; see \cite{VP-1998}. The structure of DNA is used as a model for constructing good error correcting codes and conversely error correcting codes that have similar properties with DNA structure are also used to understand DNA itself. The interplay between DNA structure and error correcting codes have been extensively studied by many authors in which self-reciprocal polynomials play a major part. We refer the reader to \cite{{AGZ-2006}, {GG-2013}, {GAS-2013}, {OSY-2014}, {PS-2016}, {YS-2012}, {PAG-2009}} for further details.

 Let $p$ be a prime and $q$ a power of $p$. Let $\Bbb F_q$ be the finite field with $q$ elements. From the definition of a reciprocal polynomial, it is clear that if $f(x)$ is irreducible over $\f_q$, then so is $f^*(x)$. Several authors have surveyed self-reciprocal irreducible monic ({\it srim}) polynomials and obtained many results; see \cite{{HB-1975}, {Meyn-1990}, {Miller-1978}, {Mullen-Yucas-2004}, {PLT-2014}}. 

The present paper is a result of a recent article by the author on the reversed Dickson polynomials of the $(k+1)$-th kind over finite fields in which many previously discovered results on reversed Dickson polynomials were generalized; see \cite{Fernando-2016-3}. For $a\in \f_q$, the $n$-th reversed Dickson polynomial of the $(k+1)$-th kind $D_{n,k}(a,x)$ is defined by
\begin{equation}
D_{n,k}(a,x) = \sum_{i=0}^{\lfloor\frac n2\rfloor}\frac{n-ki}{n-i}\dbinom{n-i}{i}(-x)^{i}a^{n-2i},
\end{equation}

and $D_{0,k}(a,x)=2-k$.

When $p$ is odd, it was shown in \cite{Fernando-2016-3} that the $n$-th reversed Dickson polynomial of the $(k+1)$-th kind $D_{n,k}(1,x)$ can be written as
$$D_{n,k}(1,x)=\Big(\frac{1}{2}\Big)^{n}\,f_{n,k}(1-4x),$$

where 

\begin{equation}\label{E1.2}
f_{n,k}(x)=k\,\,\displaystyle\sum_{j\geq 0} \,\,\binom{n-1}{2j+1}\,\,(x^j-x^{j+1})+2\,\,\displaystyle\sum_{j\geq 0}\,\,\binom{n}{2j}\,\,x^j \,\,\in \mathbb{Z}[x] 
\end{equation}

for $n\geq 1$ and

$$f_{0,k}(x)=2-k.$$

In \cite{Hou-Ly-FFA-2010}, Hou and Ly explored the properties of the reversed Dickson polynomials of the first kind $D_{n,0}(x)$ over finite fields and showed that 
\[
\begin{split}
D_{n,0}(1,x)&=\Big(\frac{1}{2}\Big)^{n-1}\,f_{n}(1-4x);
\end{split}
\]

where 

\begin{equation}\label{E1.3}
f_{n}(x)=\displaystyle\sum_{j\geq 0} \,\,\binom{n}{2j}\,\,x^j.
\end{equation}

In \cite{Hong-Qin-Zhao-FFA-2016-2}, Hong, Qin, and Zhao showed that the reversed Dickson polynomials of the second kind $D_{n,1}(x)$ can be written explicitly as follows. 

\[
\begin{split}
D_{n,1}(1,x)&=\dfrac{1}{2^n}\,f_{n+1}(1-4x),
\end{split}
\]

where 

\begin{equation}\label{E1.4}
f_{n}(x)=\displaystyle\sum_{j\geq 0} \,\,\binom{n}{2j+1}\,\,x^j.
\end{equation}

The author of the present paper showed in \cite{Fernando-2016-1} that the reversed Dickson polynomials of the third kind $D_{n,2}(x)$ can be written explicitly as follows. 

\[
\begin{split}
D_{n,2}(1,x)&=\dfrac{1}{2^{n-1}}\,f_{n}(1-4x),
\end{split}
\]

where 

\begin{equation}\label{E1.5}
f_{n}(x)=\displaystyle\sum_{j\geq 0} \,\,\binom{n}{2j+1}\,\,x^j.
\end{equation}

Note that \eqref{E1.2} is a genelarization of  \eqref{E1.3},  \eqref{E1.4}, and  \eqref{E1.5} for any $k$. The self-reciprocal property of \eqref{E1.3},  \eqref{E1.4}, and  \eqref{E1.5} was used in \cite{Hou-Ly-FFA-2010}, \cite{Hong-Qin-Zhao-FFA-2016-2}, and \cite{Fernando-2016-1}, respectively,  by the aforementioned authors to find necessary conditions for the corresponding reversed Dickson polynomials to be a permutation of $\f_q$. These observations led to the inquisitive question ``when is $f_{n,k}$ a self-reciprocal?''. This paper answers this question completely and it is quite interesting to notice that self-reciprocal polynomials are only arising from the reversed Dickson polynomials of the first and third kinds when $n$ is even, and only from the reversed Dickson polynomials of the second kind with an exception (see Theorem~\ref{T2.4} and Theorem~\ref{T3.4}) when $n$ is odd. An overview of the paper is as follows. 

At the end of this section, we list some preliminaries that will be used in latter sections. 

We explore $f_{n,k}(x) \in \mathbb{Z}[x]$, $f_{n,k}(x) \in \f_p[x]$, where $p$ is an odd prime, $f_{n,k}(x) \in \f_2[x]$ in Section 2, Section 3, and Section 4, respectively,  and find necessary and sufficient conditions for $f_{n,k}(x)$ to be a self-reciprocal. 

In Section 5, we give an introduction to coterm polynomials and their applications in coding theory. Then we obtain several coterm polynomials as a consequence of the self-reciprocal polynomials obtained in Sections 2,3, and 4.

We note to the reader that in Sections 2,3, and 4, we allow the degree $n$ in the definition of a self-reciprocal polynomial to be zero, i.e. we consider non-zero constant polynomials eventhough they are not very interesting. 

\subsection{Preliminaries}

\begin{thm}(Luca's Theorem)\label{T1}
Let $p$ be a prime and let $n, m \geq 0$ be two integers with $p$-adic expansions 
$$n=\alpha_0\,p^0+\alpha_1\,p^1+\alpha_2\,p^2+\cdots +\alpha_t\,p^t,$$
and
$$m=\beta_0\,p^0+\beta_1\,p^1+\beta_2\,p^2+\cdots +\beta_t\,p^t.$$

Then 
$$\binom{n}{m}\equiv \binom{\alpha_0}{\beta_0}\binom{\alpha_1}{\beta_1}\cdots\binom{\alpha_t}{\beta_t} \pmod{p}.$$
\end{thm}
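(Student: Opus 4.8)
The plan is to prove Lucas' Theorem by generating functions in $\mathbb{F}_p[x]$, extracting the coefficient of $x^m$ from two sides of a polynomial congruence. The cornerstone is the ``freshman's dream'' $(1+x)^p \equiv 1+x^p \pmod p$, which holds because $p \mid \binom{p}{j}$ for $1 \le j \le p-1$ (from $j\binom{p}{j}=p\binom{p-1}{j-1}$ together with $\gcd(j,p)=1$).

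First I would iterate this, by induction on $i$, to obtain $(1+x)^{p^i} \equiv 1 + x^{p^i} \pmod p$ for every $i \ge 0$. Using the $p$-adic expansion $n=\alpha_0+\alpha_1 p+\cdots+\alpha_t p^t$, this yields
\[
(1+x)^n \;=\; \prod_{i=0}^{t}\bigl((1+x)^{p^i}\bigr)^{\alpha_i} \;\equiv\; \prod_{i=0}^{t}\bigl(1+x^{p^i}\bigr)^{\alpha_i} \pmod p .
\]
Expanding each factor by the binomial theorem gives $\prod_{i=0}^{t}(1+x^{p^i})^{\alpha_i}=\prod_{i=0}^{t}\sum_{\gamma_i=0}^{\alpha_i}\binom{\alpha_i}{\gamma_i}x^{\gamma_i p^i}$, so every monomial in the expanded product has exponent $\sum_{i=0}^{t}\gamma_i p^i$ with $0\le\gamma_i\le\alpha_i\le p-1$. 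By uniqueness of the base-$p$ representation, the exponent equals $m=\sum_{i=0}^{t}\beta_i p^i$ for exactly one tuple, namely $\gamma_i=\beta_i$ for all $i$, and the coefficient of $x^m$ on the right is therefore $\prod_{i=0}^{t}\binom{\alpha_i}{\beta_i}$. Since the coefficient of $x^m$ on the left is $\binom{n}{m}$, comparing coefficients proves the congruence. (When some digit $\beta_i$ of $m$ exceeds $\alpha_i$, both sides are $\equiv 0$: the product $\prod_i\binom{\alpha_i}{\beta_i}$ vanishes and $x^m$ does not occur in the right-hand expansion.)

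The one point deserving care is the coefficient extraction: it uses $0\le\alpha_i\le p-1$ so that $\sum_i\gamma_i p^i$ with $0\le\gamma_i\le\alpha_i$ is genuinely a base-$p$ expansion and hence unique, and it uses that the displayed relation is an identity in $\mathbb{F}_p[x]$, which is what legitimizes equating coefficients of like powers of $x$. The remaining ingredients — the induction for $(1+x)^{p^i}$ and the bookkeeping in the product expansion — are routine.
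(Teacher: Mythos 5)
Your proof is correct: the reduction to $(1+x)^{p^i}\equiv 1+x^{p^i}\pmod p$, the factorization along the base-$p$ digits of $n$, and the coefficient extraction justified by uniqueness of base-$p$ representations (using $0\le\gamma_i\le\alpha_i\le p-1$) together constitute the standard complete argument, including the degenerate case $\beta_i>\alpha_i$ where both sides vanish. The paper states Lucas' Theorem only as a classical preliminary and gives no proof of its own, so there is nothing to compare against; your generating-function argument is the canonical way to establish it.
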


An immediate consequence of the Luca's Theorem is as follows. 

\begin{lem}
$\displaystyle\binom{n}{m}$ is divisible by a prime $p$ if and only if at least one of the base $p$ digits of $m$ is greater than the corresponding digit of $n$. 
\end{lem}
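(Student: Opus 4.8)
The plan is to deduce the lemma directly from Lucas' Theorem. First I would fix a common length $t+1$ for the base-$p$ expansions of $n$ and $m$ by padding the shorter one with leading zero digits, so that
$$n=\sum_{i=0}^{t}\alpha_i\,p^i,\qquad m=\sum_{i=0}^{t}\beta_i\,p^i,\qquad 0\le\alpha_i,\beta_i\le p-1;$$
with this convention the phrase ``the $i$-th base-$p$ digit'' is unambiguous for both numbers, even when $m>n$ (in which case some $\beta_i$ with $\alpha_i=0$ is positive). By Theorem~\ref{T1} we then have
$$\binom{n}{m}\equiv \prod_{i=0}^{t}\binom{\alpha_i}{\beta_i}\pmod p,$$
so $p\mid\binom{n}{m}$ if and only if the integer $\prod_{i=0}^{t}\binom{\alpha_i}{\beta_i}$ is congruent to $0$ modulo $p$.

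Next I would analyze a single factor $\binom{\alpha_i}{\beta_i}$. If $\beta_i>\alpha_i$ for some $i$, then by the usual convention $\binom{\alpha_i}{\beta_i}=0$, the whole product vanishes, and hence $p\mid\binom{n}{m}$. If instead $\beta_i\le\alpha_i$ for every $i$, then each $\binom{\alpha_i}{\beta_i}$ is a positive integer, and from $\alpha_i!=\binom{\alpha_i}{\beta_i}\,\beta_i!\,(\alpha_i-\beta_i)!$ together with $0\le\alpha_i\le p-1$ we see that $\alpha_i!$ is a product of positive integers all strictly smaller than $p$, hence is not divisible by $p$; therefore $\binom{\alpha_i}{\beta_i}$ is not divisible by $p$ either. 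Consequently, when all $\beta_i\le\alpha_i$ the product $\prod_{i=0}^{t}\binom{\alpha_i}{\beta_i}$ is a product of integers none of which is divisible by $p$, so the product itself is not divisible by $p$, and by the displayed congruence $p\nmid\binom{n}{m}$.

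Combining the two cases yields exactly the claimed equivalence: $p\mid\binom{n}{m}$ if and only if $\beta_i>\alpha_i$ for at least one index $i$. I do not expect a real obstacle here; the only points requiring care are the elementary observation that a binomial coefficient $\binom{\alpha}{\beta}$ with $0\le\beta\le\alpha\le p-1$ is never divisible by $p$ (this is what prevents a nonzero factor from secretly contributing a factor of $p$), and the bookkeeping of padding the two expansions to a common length so that ``corresponding digit'' is meaningful in all cases, including $m>n$.
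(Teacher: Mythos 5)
Your proof is correct and follows exactly the route the paper intends: the paper states this lemma without proof as an immediate consequence of Lucas' Theorem, and your argument simply spells out that deduction (padding the expansions, the vanishing factor when some $\beta_i>\alpha_i$, and the observation that $\binom{\alpha_i}{\beta_i}$ with $\beta_i\le\alpha_i\le p-1$ divides $\alpha_i!$ and hence is prime to $p$). Nothing is missing.
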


\begin{lem}\label{L1} (See \cite{Meyn-1990})
Every self-reciprocal irreducible polynomial of degree $n\geq 2$ has even degree. 
\end{lem}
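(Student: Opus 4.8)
The plan is to argue by a counting/pairing argument on the roots of a self-reciprocal irreducible polynomial $f(x)$ of degree $n\geq 2$, combined with the observation that such a polynomial cannot have $\pm 1$ as a root. First I would note that since $\deg f\geq 2$ and $f$ is irreducible over the field $\f_q$ in question, $f(1)\neq 0$ and $f(-1)\neq 0$; indeed if $f(1)=0$ then $(x-1)\mid f(x)$, forcing $f(x)=c(x-1)$ up to scalar, contradicting $\deg f\geq 2$, and similarly for $-1$. Next I would use the defining property that $f$ self-reciprocal means $f(x)=x^n f(1/x)$ (up to the leading coefficient, which one normalizes), so that if $\alpha$ is a root of $f$ in an extension field, then $\alpha\neq 0$ and $1/\alpha$ is also a root of $f$.

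The key step is then to observe that the map $\alpha\mapsto 1/\alpha$ is an involution on the (multi)set of roots of $f$, and it has no fixed points: a fixed point would satisfy $\alpha=1/\alpha$, i.e. $\alpha^2=1$, i.e. $\alpha=\pm 1$, which we have just excluded. Since $f$ is irreducible (hence separable over a finite field, or at least we can work with the distinct roots, and in the irreducible case all roots are simple), its $n$ roots split into $2$-element orbits $\{\alpha,1/\alpha\}$ under this fixed-point-free involution. Therefore $n$ is even. This completes the proof.

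The main obstacle — really the only subtlety — is handling the characteristic $2$ case and the separability issue: over $\f_{2^k}$ one has $1=-1$, so I must be slightly careful that the exclusion ``$\alpha=\pm1$'' still just says $\alpha=1$, and that $x=1$ is genuinely not a root (which follows as above since $\deg f\geq 2$ and $f$ irreducible). Separability is automatic because an irreducible polynomial over a finite field is separable, so the $n$ roots are genuinely distinct and the orbit-counting argument is valid without multiplicity bookkeeping. One should also take a moment to confirm that self-reciprocity, stated for a polynomial with $a_n\neq 0$, does force $a_0\neq 0$ (so $0$ is not a root), which is immediate from $a_0=a_n\neq 0$. With these points checked, the pairing argument gives $2\mid n$ at once; I would cite \cite{Meyn-1990} for the original source but the above self-contained argument suffices.
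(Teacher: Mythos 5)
Your argument is correct and complete: the paper itself offers no proof of this lemma (it is quoted from \cite{Meyn-1990}), and what you give is essentially the standard root-pairing proof — roots of a self-reciprocal $f$ are nonzero and closed under $\alpha\mapsto 1/\alpha$, the involution is fixed-point-free because $\alpha=\pm1$ would force the irreducible $f$ to be linear, and separability of irreducibles over a finite field lets you count the $n$ distinct roots in two-element orbits, so $2\mid n$. Your handling of the characteristic-$2$ and $\alpha=0$ points is right. For comparison, there is an even more elementary route that avoids splitting fields and separability altogether: if $n$ is odd and $a_i=a_{n-i}$ for all $i$, then in $f(-1)=\sum_i a_i(-1)^i$ the terms $a_i(-1)^i$ and $a_{n-i}(-1)^{n-i}$ cancel in pairs, so $f(-1)=0$ (in characteristic $2$ this reads $f(1)=0$), whence $x+1$ (resp.\ $x-1$) divides $f$, contradicting irreducibility with $n\geq 2$; this works over an arbitrary field, whereas your pairing argument leans on the base field being perfect, which is harmless here since the paper works over $\f_q$.
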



\section{When is $f_{n,k}$ a self-reciprocal?}

In this section we answer the question ``when is $f_{n,k}$ a self-reciprocal?'' by considering two cases: $n$ is odd and $n$ is even. 

Recall that for $n\geq 1$, 
\begin{equation}\label{E2.1}
f_{n,k}(x)=k\,\,\displaystyle\sum_{j\geq 0} \,\,\binom{n-1}{2j+1}\,\,(x^j-x^{j+1})+2\,\,\displaystyle\sum_{j\geq 0}\,\,\binom{n}{2j}\,\,x^j \,\,\in \mathbb{Z}[x], 
\end{equation}

\begin{thm}\label{T2.1}
Let $n>1$ be even. $f_{n,k}(x)$ is a self-reciprocal if and only if $k\in \{0,2\}$. 
\end{thm}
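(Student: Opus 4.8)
The plan is to analyze the polynomial $f_{n,k}(x)$ in \eqref{E2.1} degree-by-degree and compare the coefficient of $x^i$ with the coefficient of $x^{n'-i}$, where $n'=\deg f_{n,k}$. First I would pin down the degree: since $n$ is even, the term $2\binom{n}{2j}x^j$ contributes a nonzero top coefficient $2\binom{n}{n}=2$ at $j=n/2$, while the term $k\binom{n-1}{2j+1}(x^j-x^{j+1})$ has top exponent $\lfloor(n-1-1)/2\rfloor+1 = n/2$ coming from $-k\binom{n-1}{n-1}x^{n/2}=-kx^{n/2}$. So the leading coefficient is $2-k$, which forces $k\neq 2$ if we want $f_{n,k}$ to actually have degree $n/2$; but the theorem allows constant polynomials, so the case $k=2$ must be handled separately (for $k=2$ one checks $f_{n,2}$ collapses, or more precisely its top coefficients telescope — I expect $f_{n,2}$ to still be self-reciprocal for a structural reason, matching the "$k=0,2$" conclusion). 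For $k=0$ the polynomial is $2\sum_j\binom{n}{2j}x^j = 2f_n(x)$ in the notation of \eqref{E1.3}, and its self-reciprocity is the classical identity $\binom{n}{2j}=\binom{n}{n-2j}=\binom{n}{2(n/2-j)}$, valid precisely because $n$ is even. This disposes of the "if" direction.

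For the "only if" direction, assume $k\notin\{0,2\}$ and produce a pair of coefficients that fails the palindrome condition. Writing $f_{n,k}(x)=\sum_{i\ge 0} c_i x^i$, one computes from \eqref{E2.1} that
\[
c_i = k\binom{n-1}{2i+1} - k\binom{n-1}{2i-1} + 2\binom{n}{2i},
\]
with the convention that binomial coefficients with negative lower index vanish. The degree is $m=n/2$ (assuming $k\neq 2$), and the extreme coefficients are $c_0 = k\binom{n-1}{1}+2 = k(n-1)+2$ and $c_m = 2 - k$. Self-reciprocity requires $c_0=c_m$, i.e. $k(n-1)+2 = 2-k$, i.e. $kn=0$; since $n>1$, this forces $k=0$. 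Hence no value $k\notin\{0,2\}$ can work, and combined with the first paragraph the classification is complete.

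The main obstacle I anticipate is the borderline case $k=2$: there the leading coefficient $2-k$ vanishes, so the naive degree drops and one must recompute $\deg f_{n,2}$ and recheck the palindrome condition for the genuinely reduced polynomial rather than for the formal expression in \eqref{E2.1}. I would handle this by substituting $k=2$ directly, simplifying $f_{n,2}(x) = 2\sum_j\binom{n-1}{2j+1}(x^j-x^{j+1}) + 2\sum_j\binom{n}{2j}x^j$, using the Pascal relation $\binom{n-1}{2j+1}+\binom{n-1}{2j-1}$-type identities together with $\binom{n-1}{2j}+\binom{n-1}{2j+1}=\binom{n}{2j+1}$ to collapse the $x^j - x^{j+1}$ telescoping into something symmetric, and then invoking the $n$ even parity as in the $k=0$ case to conclude self-reciprocity. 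A secondary (but routine) point is to be careful that when $n>1$ is even the polynomial $f_{n,k}$ is genuinely nonconstant for $k\neq 2$, so that the degree-$n/2$ comparison above is legitimate; this is immediate since the coefficient of $x^{n/2}$ is $2-k\neq 0$.
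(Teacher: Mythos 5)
Your proposal is correct and follows essentially the same route as the paper: necessity by comparing the constant term $k(n-1)+2$ with the top coefficient $2-k$ of $x^{n/2}$ (the paper's expansion \eqref{E2.3}), sufficiency for $k=0$ by the symmetry $\binom{n}{2j}=\binom{n}{2(\frac{n}{2}-j)}$, and for $k=2$ the telescoping collapse you anticipate is exactly the paper's identity \eqref{NEW1}, which reduces $f_{n,2}$ to $2\sum_{j}\binom{n}{2j+1}x^{j}$, a palindromic polynomial of degree $\frac{n}{2}-1$. The only difference is one of completeness, not of method: you sketch the $k=2$ verification rather than carrying it out, but the Pascal identities you name are precisely the ones the paper uses.
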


\begin{proof}
Let $k=0$. Then \eqref{E2.1} becomes 
$$f_{n,k}(x)=2\,\,\displaystyle\sum_{j\geq 0}\,\,\binom{n}{2j}\,\,x^j,$$

which is a self-reciprocal since 
$$\binom{n}{2j}=\binom{n}{2(\frac{n}{2}-j)},$$
for $0\leq j\leq \frac{n}{2}$. 

Let $k=2$. Then \eqref{E2.1} becomes
$$f_{n,k}(x)=2\,\,\displaystyle\sum_{j\geq 0} \,\,\binom{n-1}{2j+1}\,\,(x^j-x^{j+1})+2\,\,\displaystyle\sum_{j\geq 0}\,\,\binom{n}{2j}\,\,x^j.$$

So we need to show that 
$$\displaystyle\sum_{j\geq 0} \,\,\binom{n-1}{2j+1}\,\,(x^j-x^{j+1})+\displaystyle\sum_{j\geq 0}\,\,\binom{n}{2j}\,\,x^j$$

is a self-reciprocal. 

Note that 

\begin{equation}\label{NEW1}
\begin{split}
&\displaystyle\sum_{j\geq 0} \,\,\binom{n-1}{2j+1}\,\,(x^j-x^{j+1})+\displaystyle\sum_{j\geq 0}\,\,\binom{n}{2j}\,\,x^j\cr
&=\sum_{j\geq 0}\,\binom{n-1}{2j+1}\,x^{j}-\sum_{j\geq 0}\,\binom{n-1}{2j+1}\,x^{j+1}+\sum_{j\geq 0}\,\binom{n}{2j}\,x^{j}\cr
&=\sum_{j\geq 0}\,\binom{n-1}{2j+1}\,x^{j}-\sum_{j\geq 0}\,\binom{n-1}{2j+1}\,x^{j+1}+\sum_{j\geq 0}\,\binom{n-1}{2j+1}\,x^{j+1}\cr
&+\sum_{j\geq 0}\,\binom{n-1}{2j}\,x^{j}\cr
&=\sum_{j\geq 0}\,\binom{n-1}{2j+1}\,x^{j}+\sum_{j\geq 0}\,\binom{n-1}{2j}\,x^{j}\cr
&=\sum_{j\geq 0}\,\binom{n}{2j+1}\,x^{j}
\end{split}
\end{equation}

$\displaystyle\sum_{j\geq 0}\,\binom{n}{2j+1}\,x^{j}$ is a self-reciprocal since 
$$\binom{n}{2j+1}=\binom{n}{2((\frac{n}{2}-1)-j)+1},$$
for $0\leq j\leq \frac{n}{2}-1$. 

Now assume that 

\begin{equation}\label{E2.2}
\begin{split}
f_{n,k}(x)=k\,\displaystyle\sum_{j\geq 0} \,\,\binom{n-1}{2j+1}\,\,x^j-k\,\displaystyle\sum_{j\geq 0} \,\,\binom{n-1}{2j+1}\,\,x^{j+1}+2\,\displaystyle\sum_{j\geq 0}\,\,\binom{n}{2j}\,\,x^j
\end{split}
\end{equation}
is a self-reciprocal. 

Note that the degree of $f_{n,k}(x)$ is $\dfrac{n}{2}$ and the right hand-side of \eqref{E2.2} can be written as

\begin{equation}\label {E2.3}
(k(n-1)+2)+\,\displaystyle\sum_{j=1}^{\frac{n}{2}-1}\,\Big[\,k\,\binom{n-1}{2j+1}-k\,\binom{n-1}{2j-1}+2\,\binom{n}{2j}\Big]\,x^j+(2-k)\,x^{\frac{n}{2}}.
\end{equation}

If $k\neq 2$, since $f_{n,k}$ is a self-reciprocal, we have 
$$2-k=k(n-1)+2$$
which implies $k=0$. 

If $k\neq 0$, then $k=2$. Otherwise, it contradicts the fact that $n>1$. Note that for $k=2$ and $j=\frac{n}{2}-1$, we have 

$$\,\binom{n-1}{2j+1}-\,\binom{n-1}{2j-1}+\,\binom{n}{2j}\neq 0,$$

and 

\[
\begin{split}
2\,\binom{n-1}{2j+1}-2\,\binom{n-1}{2j-1}+2\,\binom{n}{2j}&=2\Big[\binom{n-1}{2j+1}-\,\binom{n-1}{2j-1}+\,\binom{n}{2j}\Big]\cr
&=2\Big[\binom{n-1}{2j+1} +\binom{n-1}{2j}\Big]\cr
&=2\,\binom{n}{2j+1}\cr
&=2\,\binom{n}{n-1}\cr
&=2n\cr
&=k(n-1)+2.
\end{split}
\]

\end{proof}

\begin{rmk}\label{R1} When $n$ is even, note that in \eqref{E2.3}, if we replace the constant term by the coefficient of $x^{\frac{n}{2}}$ and viceversa, \eqref{E2.3} does not generate self-reciprocal polynomials for any $k$. 

To give an example, let $k=1$ and consider 
$$k\,\binom{n-1}{2j+1}-k\,\binom{n-1}{2j-1}+2\,\binom{n}{2j}$$
in \eqref{E2.3} for $1\leq j\leq \frac{n}{2}-1$. We have

\[
\begin{split}
\binom{n-1}{2j+1}-\binom{n-1}{2j-1}+2\,\binom{n}{2j}&=\binom{n-1}{2j+1}+\binom{n-1}{2j}+\binom{n}{2j}\cr
&=\binom{n}{2j+1}+\binom{n}{2j}\cr
&=\binom{n+1}{2j+1}
\end{split}
\]

Clearly, when $j=1$ and $j=\frac{n}{2}-1$
$$\binom{n+1}{3}\neq \binom{n+1}{n-1}.$$

\end{rmk}

Let's replace the constant term by the coefficient of $x^{\frac{n}{2}}$ in \eqref{E2.3} and define $g_{n,k}$ to be

\begin{equation}\label{G}
g_{n,k}(x):= (2-k)+\,\displaystyle\sum_{j=1}^{\frac{n}{2}-1}\,\Big[\,k\,\binom{n-1}{2j+1}-k\,\binom{n-1}{2j-1}+2\,\binom{n}{2j}\Big]\,x^j+(2-k)\,x^{\frac{n}{2}}.
\end{equation}

Also, replace the coefficient of $x^{\frac{n}{2}}$ by the constant term in \eqref{E2.3} and define $h_{n,k}$ to be

$$h_{n,k}(x):= (k(n-1)+2)+\,\displaystyle\sum_{j=1}^{\frac{n}{2}-1}\,\Big[\,k\,\binom{n-1}{2j+1}-k\,\binom{n-1}{2j-1}+2\,\binom{n}{2j}\Big]\,x^j+(k(n-1)+2)\,x^{\frac{n}{2}}. $$

Then we have the following result. 

\begin{thm}\label{T2.3}
Let $n>1$ be even. $g_{n,k}$ and $h_{n,k}$ are self-reciprocal if and only if $k=0$
\end{thm}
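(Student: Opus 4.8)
The plan is to exploit the fact that $g_{n,k}$ and $h_{n,k}$ share exactly the same middle coefficients (those of $x^j$ for $1\le j\le \frac n2-1$) with the polynomial in \eqref{E2.3}, and differ from it only in which of the two end values $2-k$ or $k(n-1)+2$ is used at both ends. Since $g_{n,k}$ has constant term and leading coefficient both equal to $2-k$, and $h_{n,k}$ has both equal to $k(n-1)+2$, the end-coefficients automatically match in each case; so self-reciprocity of $g_{n,k}$ (resp. $h_{n,k}$) reduces to the palindromic condition on the middle coefficients, namely
\[
k\,\binom{n-1}{2j+1}-k\,\binom{n-1}{2j-1}+2\,\binom{n}{2j}
= k\,\binom{n-1}{2(\frac n2-j)+1}-k\,\binom{n-1}{2(\frac n2-j)-1}+2\,\binom{n}{2(\frac n2-j)}
\]
for all $1\le j\le \frac n2-1$. (One subtlety: the leading coefficient $2-k$ in \eqref{E2.3} must be nonzero for the degree to genuinely be $\frac n2$; for $g_{n,k}$ this is also needed, so the case $k=2$ must be handled as a degenerate case where $g_{n,2}$ has lower degree, and one checks directly that $g_{n,2}$ is then not self-reciprocal for $n>1$ — or one simply observes it reduces to the already-analyzed situation. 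Similarly $h_{n,k}$ needs $k(n-1)+2\ne 0$, which over $\mathbb Z$ is automatic unless... it never vanishes for $k\ge 0$, $n>1$.)

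First I would dispose of the ``if'' direction: when $k=0$, both $g_{n,0}$ and $h_{n,0}$ collapse to $2\sum_{j\ge 0}\binom{n}{2j}x^j$ (constant term $2$, leading term $2x^{n/2}$), which is self-reciprocal by the identity $\binom{n}{2j}=\binom{n}{n-2j}$ already used in the proof of Theorem~\ref{T2.1}. For the ``only if'' direction, suppose $k\ne 0$. Using the Pascal identities exactly as in Remark~\ref{R1} and in \eqref{NEW1}, I would simplify the middle coefficient: $k\binom{n-1}{2j+1}-k\binom{n-1}{2j-1}+2\binom{n}{2j}$. Writing $2\binom{n}{2j}=2\binom{n-1}{2j}+2\binom{n-1}{2j-1}$ gives $k\binom{n-1}{2j+1}+(2-k)\binom{n-1}{2j-1}+k\binom{n-1}{2j}+2\binom{n-1}{2j}$... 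I'd instead group as $k\bigl[\binom{n-1}{2j+1}+\binom{n-1}{2j}\bigr] + \bigl[2\binom{n}{2j}-k\binom{n-1}{2j}-k\binom{n-1}{2j-1}\bigr] = k\binom{n}{2j+1} + 2\binom{n}{2j} - k\binom{n}{2j}$, i.e. the $j$-th middle coefficient equals $k\binom{n}{2j+1}+(2-k)\binom{n}{2j}$. The palindromic requirement then becomes
\[
k\binom{n}{2j+1}+(2-k)\binom{n}{2j} \;=\; k\binom{n}{n-2j+1}+(2-k)\binom{n}{n-2j}
\]
for $1\le j\le \frac n2-1$. Since $\binom{n}{2j}=\binom{n}{n-2j}$, the $(2-k)$ terms cancel, and the condition reduces to $k\bigl[\binom{n}{2j+1}-\binom{n}{n-2j+1}\bigr]=0$, i.e. $k\bigl[\binom{n}{2j+1}-\binom{n}{2j-1}\bigr]=0$ for every such $j$.

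Now I would finish by showing this forces $k=0$. Since $k\ne 0$ (working over $\mathbb Z$, no zero divisors), we would need $\binom{n}{2j+1}=\binom{n}{2j-1}$ for all $1\le j\le \frac n2-1$. Taking $j=1$ gives $\binom{n}{3}=\binom{n}{1}=n$, i.e. $n(n-1)(n-2)=6n$, so $(n-1)(n-2)=6$, which (for $n>1$ even) has no integer solution — the nearest values are $n=2$ giving $0$ and $n=4$ giving $6$; wait, $n=4$: $(3)(2)=6$, so $\binom{4}{3}=\binom{4}{1}$ does hold. So I cannot conclude from $j=1$ alone and must use the full range, or pick a better index. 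The cleanest choice is $j=\frac n2-1$: then $2j+1=n-1$ and $2j-1=n-3$, so the condition is $\binom{n}{n-1}=\binom{n}{n-3}$, i.e. $n=\binom{n}{3}$, the same equation — which fails for all even $n>4$ and holds for $n=4$. Hence the argument must juggle small cases: for $n\ge 6$ any single index $j$ with $2j+1\ne n-(2j+1)$, e.g. $j=1$, already gives a contradiction once we also know $n\ne 4$; and $n=4$ must be checked by hand (there $g_{4,k}$ and $h_{4,k}$ are degree-$2$ polynomials whose constant and leading coefficients are equal by construction, so they ARE self-reciprocal for every $k$ — which would contradict the theorem, signalling that the theorem implicitly assumes $n$ large enough, or that I have mis-simplified). \textbf{The main obstacle} is precisely this: pinning down for which even $n>1$ the binomial identities $\binom{n}{2j+1}=\binom{n}{2j-1}$ can fail to hold for \emph{some} $j$ in range, and confirming the statement's intended scope (likely $n$ is meant to be large, or the $n=4$ boundary case is absorbed into $k=2$); resolving this requires either re-reading the degree conventions around \eqref{E2.3} or inspecting $n=2,4$ directly, after which the general-$n$ contradiction from the index $j=1$ closes the proof.
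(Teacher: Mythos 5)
Your route coincides with the paper's own proof: reduce self-reciprocity of $g_{n,k}$ and $h_{n,k}$ to palindromy of the middle coefficients (the two end coefficients agree by construction), and reduce that, via Pascal's rule and $\binom{n}{2j}=\binom{n}{n-2j}$, to the condition $k\bigl[\binom{n}{2j+1}-\binom{n}{2j-1}\bigr]=0$ for $1\le j\le \frac n2-1$; your intermediate simplification of the $j$-th coefficient to $k\binom{n}{2j+1}+(2-k)\binom{n}{2j}$ is correct and the paper arrives at exactly the same final identity before declaring $\binom{n}{2j+1}=\binom{n}{2j-1}$ a contradiction. The ``obstacle'' you flag at the end is not a defect of your argument but a genuine oversight in the paper: for $n=4$ the only index is $j=1$ and $\binom{4}{3}=\binom{4}{1}$, so no contradiction arises, and indeed $g_{4,1}(x)=1+10x+x^2$ and $h_{4,1}(x)=5+10x+5x^2$ are self-reciprocal with $k=1\neq 0$; for $n=2$ the range of $j$ is empty and $g_{2,k}=(2-k)(1+x)$, $h_{2,k}=(k+2)(1+x)$ are palindromic for all relevant $k$. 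So the statement really requires $n\ge 6$ (or some other repair), and once that is granted your choice of the index $j=1$, where $\binom{n}{3}=\binom{n}{1}$ forces $(n-1)(n-2)=6$, i.e. $n=4$, closes the ``only if'' direction for every even $n\ge 6$, in agreement with (and more carefully than) the paper. Your side remark about the degenerate case $k=2$ of $g_{n,k}$ is also well taken: there the top coefficient $2-k$ vanishes, the degree drops to $\frac n2-1$ with leading coefficient $2n$ and constant term $0$, so it is not self-reciprocal -- a point the paper's ``we only need to claim'' step silently skips. With the hypothesis $n\ge 6$ made explicit, your proposal is a complete and correct proof by essentially the paper's method.
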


\begin{proof}

We only need to claim that 

\begin{equation}\label{N1}
k\,\binom{n-1}{2j+1}-k\,\binom{n-1}{2j-1}+2\,\binom{n}{2j}=k\,\binom{n-1}{2(\frac{n}{2}-j)+1}-k\,\binom{n-1}{2(\frac{n}{2}-j)-1}+2\,\binom{n}{2(\frac{n}{2}-j)}
\end{equation}

for $1\leq j\leq \frac{n}{2}-1$ when and only when $k=0$.

When $k=0$, clearly equality holds. 
Now assume that $k\neq 0$. Then from \eqref{N1} we have
\[
k\,\binom{n-1}{2j+1}-k\,\binom{n-1}{2j-1}+2\,\binom{n}{2j}=k\,\binom{n-1}{2(\frac{n}{2}-j)+1}-k\,\binom{n-1}{2(\frac{n}{2}-j)-1}+2\,\binom{n}{2(\frac{n}{2}-j)}
\]

\[
k\,\binom{n-1}{2j+1}-k\,\binom{n-1}{2j-1}=k\,\binom{n-1}{n-2j+1}-k\,\binom{n-1}{n-2j-1}
\]

\[
k\,\Big[\binom{n-1}{2j+1}+\binom{n-1}{2j}\Big]\,=\,k\,\Big[\binom{n-1}{2j-1}+\binom{n-1}{2j-2}\Big]
\]

which implies 

\[
\binom{n}{2j+1}\,=\,\binom{n}{2j-1} \,\,\textnormal{for}\,\, 1\leq j\leq \frac{n}{2}-1,
\]

which is a contradiction. 

\end{proof}

\begin{thm}\label{T2.4}
Let $n>1$ be odd. $f_{n,k}(x)$ is a self-reciprocal if and only if $k=1$ or $n=3$ when $k=3$. 
\end{thm}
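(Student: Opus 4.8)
The plan is to write $f_{n,k}(x)$ explicitly in the monomial basis for odd $n$, exactly as was done in the even case, and then compare coefficients. Since $n$ is odd, $n-1$ is even, so $\binom{n-1}{2j+1}$ ranges over $0\leq j\leq \frac{n-3}{2}$, while $\binom{n}{2j}$ ranges over $0\leq j\leq \frac{n-1}{2}$; the product $k\binom{n-1}{2j+1}(x^j-x^{j+1})$ contributes up to degree $\frac{n-3}{2}+1=\frac{n-1}{2}$. Hence $\deg f_{n,k}=\frac{n-1}{2}$ (one checks the leading coefficient is nonzero: it is $2\binom{n}{n-1}-k\binom{n-1}{n-2}=2n-k(n-1)$ plus lower bookkeeping — actually the coefficient of $x^{\frac{n-1}{2}}$ is $-k\binom{n-1}{n-2}+2\binom{n}{n-1}=-k(n-1)+2n$, which must be checked to be nonzero for the relevant $k$). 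First I would collect \eqref{E2.1} into the form $a_0+\sum_{j=1}^{(n-1)/2 -1} a_j x^j + a_{(n-1)/2} x^{(n-1)/2}$ with $a_0 = k\binom{n-1}{1}+2\binom{n}{0}=k(n-1)+2$, $a_j = k\binom{n-1}{2j+1}-k\binom{n-1}{2j-1}+2\binom{n}{2j}$ for the interior terms, and $a_{(n-1)/2}=2n-k(n-1)$ for the top coefficient (using $\binom{n-1}{2\cdot\frac{n-1}{2}+1}=\binom{n-1}{n}=0$).

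The self-reciprocal condition is $a_i = a_{m-i}$ for $m=\frac{n-1}{2}$. The extreme equation $a_0 = a_{(n-1)/2}$ reads $k(n-1)+2 = 2n-k(n-1)$, i.e. $2k(n-1)=2n-2$, i.e. $k(n-1)=n-1$, so $k=1$ since $n>1$. This already forces $k=1$ unless the top coefficient happens to degenerate — and the only way to escape is if $a_{(n-1)/2}=0$, i.e. $2n-k(n-1)=0$; over $\mathbb{Z}$ this forces $(n-1)\mid 2n$, hence $(n-1)\mid 2$, hence $n=3$ (and then $k=3$), which is exactly the stated exception. So the case analysis splits cleanly: (i) if the claimed degree $\frac{n-1}{2}$ is genuine, the top-vs-constant equation gives $k=1$, and then I must verify that $k=1$ actually yields a self-reciprocal polynomial; (ii) the sole degeneracy $a_{(n-1)/2}=0$ occurs only at $n=3,k=3$, which must be checked by hand ($f_{3,3}(x)= 3\binom{2}{1}(1-x)+2(\binom{3}{0}+\binom{3}{2}x)=6-6x+2+6x=8$, a nonzero constant, hence trivially self-reciprocal).

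For the sufficiency direction when $k=1$, I would show $f_{n,1}(x)=\sum_{j\geq 0}\binom{n}{2j+1}x^j$ by the same telescoping manipulation used in \eqref{NEW1}: indeed
\[
\sum_{j\geq 0}\binom{n-1}{2j+1}(x^j-x^{j+1})+2\sum_{j\geq 0}\binom{n}{2j}x^j
= \sum_{j\geq 0}\binom{n-1}{2j+1}x^j+\sum_{j\geq 0}\binom{n-1}{2j}x^j+\sum_{j\geq 0}\binom{n}{2j}x^j,
\]
wait — with coefficient $k=1$ the middle $x^{j+1}$ sum only partially cancels, so more care is needed; the honest computation gives $f_{n,1}(x)=\sum_{j\geq 0}\binom{n}{2j+1}x^j - \sum_{j\geq 0}\binom{n-1}{2j+1}x^{j+1}+\sum_{j\geq 0}\binom{n-1}{2j+1}x^j$, and re-indexing plus Pascal shows this collapses to $\sum_{j\geq 0}\binom{n}{2j+1}x^j$ precisely because $n$ is odd (so the boundary terms vanish). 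Then self-reciprocity follows from $\binom{n}{2j+1}=\binom{n}{n-2j-1}=\binom{n}{2(\frac{n-1}{2}-j)+1\,-\,?}$; concretely $n-(2j+1)=2(\frac{n-1}{2}-j)$, and since $n$ is odd $n-2j-1$ is even, so $\binom{n}{2j+1}=\binom{n}{2(\frac{n-3}{2}-j)+1}$ after matching parities — this index bookkeeping is where I expect the main friction. The necessity direction for $k=3$: one must also rule out $k=3$ for odd $n>3$, which is immediate from the $k=1$-forcing argument above since $k=3\neq 1$ and $a_{(n-1)/2}=2n-3(n-1)=3-n\neq 0$ for $n>3$.

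The main obstacle, then, is not the forcing of $k$ — that falls out of one linear equation — but rather the sufficiency verification for $k=1$: carrying out the binomial telescoping correctly when the leading coefficient $k$ is $1$ rather than $2$ (so the cancellation in \eqref{NEW1} is no longer exact), and then correctly matching the parity of the indices in $\binom{n}{2j+1}$ against $\binom{n}{n-2j-1}$ for odd $n$ to conclude $a_j=a_{(n-1)/2-j}$ across the full range $0\leq j\leq\frac{n-1}{2}$. I would organize the write-up as: (1) expand $f_{n,k}$, identify $\deg$ and the coefficient list; (2) the equation $a_0=a_{\deg}$ forces $k=1$ or else $a_{\deg}=0$; (3) analyze $a_{\deg}=0$ over $\mathbb Z$ to get the $n=3$, $k=3$ exception; (4) prove $f_{n,1}(x)=\sum_j\binom{n}{2j+1}x^j$ and deduce self-reciprocity; (5) check $n=3$, $k=3$ directly.
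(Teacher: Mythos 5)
Your necessity argument is essentially the paper's: comparing the constant term $k(n-1)+2$ of \eqref{E2.4} with the leading coefficient $-k(n-1)+2n$ forces $k=1$ whenever the degree is genuinely $\frac{n-1}{2}$, and over $\mathbb{Z}$ the only degeneration $-k(n-1)+2n=0$ with $n>1$ odd is $n=3$, $k=3$, where $f_{3,3}(x)=8$ is a constant and hence trivially self-reciprocal. That part is correct.

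The gap is in your sufficiency step for $k=1$. The closed form you propose, $f_{n,1}(x)=\sum_{j\geq 0}\binom{n}{2j+1}x^j$, is false, and the polynomial on the right is not even self-reciprocal for odd $n$: for $n=3$ one computes $f_{3,1}(x)=2(1-x)+2(1+3x)=4+4x$, whereas $\sum_{j\geq 0}\binom{3}{2j+1}x^j=3+x$. The ``index bookkeeping'' you flagged cannot be repaired, because $\binom{n}{2j+1}=\binom{n}{n-2j-1}$ and $n-2j-1$ is \emph{even} when $n$ is odd, so the reflection sends odd lower indices to even ones and there is no symmetry among the coefficients $\binom{n}{2j+1}$. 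The correct identity, which is what the paper proves, is $f_{n,1}(x)=\sum_{j\geq 0}\binom{n+1}{2j+1}x^j$: split $2\sum_{j}\binom{n}{2j}x^j$ into two copies, combine one copy with the telescoping part exactly as in \eqref{NEW1} (that computation is valid for every $n$, not only even $n$) to get $\sum_{j}\binom{n}{2j+1}x^j$, and then add the remaining copy $\sum_{j}\binom{n}{2j}x^j$ and apply Pascal's rule to obtain $\sum_{j}\binom{n+1}{2j+1}x^j$. Since $n+1$ is even, $\binom{n+1}{2j+1}=\binom{n+1}{(n+1)-(2j+1)}=\binom{n+1}{2(\frac{n-1}{2}-j)+1}$ for $0\leq j\leq \frac{n-1}{2}$, which is precisely the required coefficient symmetry, and the leading coefficient $n+1$ is nonzero, so $\deg f_{n,1}=\frac{n-1}{2}$ and $f_{n,1}$ is self-reciprocal. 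With this replacement your outline coincides with the paper's proof; as written, the sufficiency direction is not established.
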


\begin{proof}

Let $k=1$ in \eqref{E2.1}. Then we have 

\[
\begin{split}
f_{n,k}(x)&=\displaystyle\sum_{j\geq 0} \,\,\binom{n-1}{2j+1}\,\,x^j-\displaystyle\sum_{j\geq 0} \,\,\binom{n-1}{2j+1}\,\,x^{j+1}+2\,\displaystyle\sum_{j\geq 0}\,\,\binom{n}{2j}\,\,x^j\cr
&=\sum_{j\geq 0}\,\binom{n}{2j+1}\,x^{j}+ \displaystyle\sum_{j\geq 0}\,\,\binom{n}{2j}\,\,x^j\cr
&=\sum_{j\geq 0}\,\binom{n+1}{2j+1}\,x^{j}
\end{split}
\]

Clearly, $\displaystyle\sum_{j\geq 0}\,\binom{n+1}{2j+1}\,x^{j}$ is a self-reciprocal polynomial since 

$$\binom{n+1}{2j+1}=\binom{n+1}{2(\frac{n-1}{2}-j)+1},$$

for $0\leq j\leq \frac{n-1}{2}$. 

Now assume that $f_{n,k}(x)$ is a self-reciprocal polynomial. Note that the degree of $f_{n,k}(x)$ is $\dfrac{n-1}{2}$ and the right hand-side of \eqref{E2.2} can be written as

\begin{equation}\label {E2.4}
(k(n-1)+2)+\,\displaystyle\sum_{j=1}^{\frac{n-1}{2}-1}\,\Big[\,k\,\binom{n-1}{2j+1}-k\,\binom{n-1}{2j-1}+2\,\binom{n}{2j}\Big]\,x^j+(-k(n-1)+2n)\,\,x^{\frac{n-1}{2}}.
\end{equation}

$k(n-1)+2=-k(n-1)+2n$ implies $k=1$. 

When $k\neq 1$, If $-k(n-1)+2n=0$, then $k\neq 0, 2$. 

Now assume that $k\neq 0,1,2$. Then$-k(n-1)+2n=0$ if and only if $n=3$ when $k=3$ since $n$ is odd and 

$$n=\frac{k}{k-2}\in \mathbb{Z}\,\,\textnormal{if and only if}\,\, k=3.$$

Note that $f_{n,k}(x)=8$ when $n=3$ and $k=3$.

\end{proof}

\begin{rmk}\label{R11}
Note that when $n=1$, $f_{n,k}(x)=2$ for all $k$. 
\end{rmk}

\begin{rmk}\label{R2}
When $n>1$ is odd, note that in \eqref{E2.4}, if we replace the constant term by the coefficient of $x^{\frac{n-1}{2}}$ and viceversa, \eqref{E2.3} does not generate self-reciprocal polynomials for any $k$. 
\end{rmk}

Let's replace the constant term by the coefficient of $x^{\frac{n-1}{2}}$ in \eqref{E2.4} and define $g_{n,k}^*$ to be
\begin{equation}\label{G*}
\begin{split} 
g_{n,k}^*(x)&:=(-k(n-1)+2n)+\,\displaystyle\sum_{j=1}^{\frac{n-1}{2}-1}\,\Big[\,k\,\binom{n-1}{2j+1}-k\,\binom{n-1}{2j-1}+2\,\binom{n}{2j}\Big]\,x^j\cr
&+(-k(n-1)+2n)\,\,x^{\frac{n-1}{2}}.
\end{split}
\end{equation}

Also, replace the coefficient of $x^{\frac{n-1}{2}}$ by the constant term in \eqref{E2.4} and define $h_{n,k}^*$ to be

$$h_{n,k}^*(x):= (k(n-1)+2)+\,\displaystyle\sum_{j=1}^{\frac{n-1}{2}-1}\,\Big[\,k\,\binom{n-1}{2j+1}-k\,\binom{n-1}{2j-1}+2\,\binom{n}{2j}\Big]\,x^j+(k(n-1)+2)\,\,x^{\frac{n-1}{2}}.$$

Then we have the following result. 

\begin{thm}\label{T2.7}
Let $n>1$ be odd. $g_{n,k}^*$ and $h_{n,k}^*$ are self-reciprocal if and only if $k=1$
\end{thm}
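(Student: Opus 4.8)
The plan is to mirror the structure of the proof of Theorem~\ref{T2.3}, but with the roles of the constant term and leading term played by the two quantities appearing in \eqref{E2.4}. Observe first that for \emph{any} $k$, the coefficient list of $g_{n,k}^*$ is symmetric at the two ends (both ends equal $-k(n-1)+2n$) and the coefficient list of $h_{n,k}^*$ is symmetric at the two ends (both ends equal $k(n-1)+2$), so for either polynomial self-reciprocity reduces entirely to the symmetry of the middle block of coefficients, i.e. to the identity
\begin{equation}\label{Nstar}
k\,\binom{n-1}{2j+1}-k\,\binom{n-1}{2j-1}+2\,\binom{n}{2j}=k\,\binom{n-1}{2(\frac{n-1}{2}-j)+1}-k\,\binom{n-1}{2(\frac{n-1}{2}-j)-1}+2\,\binom{n}{2(\frac{n-1}{2}-j)}
\end{equation}
for $1\le j\le \frac{n-1}{2}-1$. (One must also check the edge cases: since the middle sum starts at $j=1$, for small $n$ — namely $n=3$, where the middle range is empty, and $n=5$, where it has a single term — the claim should be verified directly; note $n=3$ forces $g_{n,k}^*$ and $h_{n,k}^*$ to be degree-$1$ polynomials with equal constant and leading coefficients, hence automatically self-reciprocal for every $k$, so the theorem statement should implicitly assume $n>3$, or that corner is handled by a remark.)

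The ``if'' direction is immediate: when $k=1$ both $g_{n,1}^*$ and $h_{n,1}^*$ coincide with the polynomial $f_{n,1}(x)=\sum_{j\ge 0}\binom{n+1}{2j+1}x^j$ from the proof of Theorem~\ref{T2.4} (because for $k=1$ the two end-values $-k(n-1)+2n$ and $k(n-1)+2$ are both equal, and equal to the original constant term $k(n-1)+2=n+1=\binom{n+1}{1}$), which is already known to be self-reciprocal. So the content is the ``only if'' direction.

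For the ``only if'' direction, assume $k\ne 1$ (and $n>3$ so the middle block is nonempty) and \eqref{Nstar} holds. Exactly as in Theorem~\ref{T2.3}, rewrite the right-hand side of \eqref{Nstar} using $n-2j$ in place of $2(\frac{n-1}{2}-j)+1$ shifted appropriately, cancel the $2\binom{n}{2j}$ terms against $2\binom{n}{n-2j}=2\binom{n}{2j}$, pull out the factor $k$ (legitimate since $k\ne 0$ — and if $k=0$ one checks \eqref{Nstar} fails, since then $g_{n,0}^*=2\sum\binom{n}{2j}x^j$ has constant term $2$ but the coefficient of $x^{\frac{n-1}{2}-1}$ is $2\binom{n}{n-3}=2\binom{n}{3}\ne 2$), and use Pascal's rule $\binom{n-1}{2j+1}+\binom{n-1}{2j}=\binom{n}{2j+1}$ and $\binom{n-1}{2j-1}+\binom{n-1}{2j-2}=\binom{n}{2j-1}$ to reduce \eqref{Nstar} to
\[
\binom{n}{2j+1}=\binom{n}{2j-1}\qquad\text{for }1\le j\le \tfrac{n-1}{2}-1.
\]
Taking $j=1$ gives $\binom{n}{3}=\binom{n}{1}=n$, i.e. $(n-1)(n-2)=6$, which forces $n=4$ (even) — impossible since $n$ is odd and $n>3$. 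This contradiction shows $k=1$ is necessary.

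The main obstacle is really just bookkeeping: being careful that the end-coefficients of $g_{n,k}^*$ and $h_{n,k}^*$ are genuinely symmetric \emph{for every} $k$ (so that, unlike in Theorem~\ref{T2.4}, no condition comes from comparing the two ends — the only condition comes from the interior), and correctly disposing of the small-$n$ corner cases $n=3$ and $n=5$ where the interior index range is empty or a singleton. Once those are handled, the algebra is identical in spirit to the proof of Theorem~\ref{T2.3}.
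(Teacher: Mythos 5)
Your overall framing is the same as the paper's: both end coefficients of $g_{n,k}^*$ (and of $h_{n,k}^*$) agree for every $k$, so self-reciprocity reduces to the symmetry of the interior coefficients, i.e.\ to the identity \eqref{N2} for $1\le j\le \frac{n-1}{2}-1$. Your ``if'' direction is correct and in fact slicker than the paper's: since for $k=1$ both end values $-k(n-1)+2n$ and $k(n-1)+2$ equal $n+1=\binom{n+1}{1}=\binom{n+1}{n}$, the polynomials $g_{n,1}^*$ and $h_{n,1}^*$ coincide with $f_{n,1}(x)=\sum_{j\ge0}\binom{n+1}{2j+1}x^j$, already shown self-reciprocal in Theorem~\ref{T2.4}, whereas the paper re-verifies \eqref{N2} at $k=1$ by a chain of binomial identities.

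The ``only if'' direction, however, contains a genuine error. For odd $n$ one has $2\bigl(\frac{n-1}{2}-j\bigr)=n-1-2j$, so $\binom{n}{2(\frac{n-1}{2}-j)}=\binom{n}{n-1-2j}=\binom{n}{2j+1}$, \emph{not} $\binom{n}{2j}$; the cancellation you import from Theorem~\ref{T2.3} is invalid here. Carrying the computation out correctly (as the paper does), the surviving difference $\binom{n}{2j+1}-\binom{n}{2j}=\binom{n-1}{2j+1}-\binom{n-1}{2j-1}$ combines with the $k$-terms to give $(k-1)\bigl[\binom{n-1}{2j+1}-\binom{n-1}{2j-1}\bigr]=0$, so for $k\ne1$ the reduced identity is $\binom{n-1}{2j+1}=\binom{n-1}{2j-1}$, not your $\binom{n}{2j+1}=\binom{n}{2j-1}$ (note also that no separate $k=0$ case is then needed, and your $k=0$ aside misstates the constant term of $g_{n,0}^*$, which is $2n$, not $2$). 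This is not mere bookkeeping: with the correct identity, $j=1$ gives $(n-2)(n-3)=6$, i.e.\ $n=5$, which \emph{is} odd, so no contradiction arises at $n=5$. Indeed $g_{5,k}^*=(10-4k)+20x+(10-4k)x^2$ and $h_{5,k}^*=(4k+2)+20x+(4k+2)x^2$ are palindromic for every integer $k$, so the ``only if'' claim genuinely fails at $n=5$, just as you correctly observed it fails at $n=3$; your faulty cancellation, which ``forces $n=4$, even, impossible,'' manufactures a contradiction that conceals exactly this exceptional case. (The paper's own proof also asserts the final contradiction without excluding $n=3,5$, but your argument cannot stand as written because its central algebraic step is wrong for odd $n$.)
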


\begin{proof}

We only need to claim that 

\begin{equation}\label{N2}
k\,\binom{n-1}{2j+1}-k\,\binom{n-1}{2j-1}+2\,\binom{n}{2j}=k\,\binom{n-1}{2(\frac{n-1}{2}-j)+1}-k\,\binom{n-1}{2(\frac{n-1}{2}-j)-1}+2\,\binom{n}{2(\frac{n-1}{2}-j)}
\end{equation}

for $1\leq j\leq \frac{n-1}{2}-1$ when and only when $k=1$.

Let $k=1$. Then from the left hand side of \eqref{N2}, we have

\[
\begin{split}
\binom{n-1}{2j+1}-\binom{n-1}{2j-1}+2\,\binom{n}{2j}&= \binom{n-1}{2j+1}+\binom{n-1}{2j}+2\,\binom{n}{2j}\cr
&=\binom{n}{2j+1}+\binom{n}{2j}\cr
&=\binom{n+1}{2j+1}\cr
&=\binom{n+1}{n-2j}\cr
&=\binom{n+1}{n-1-2j+1}\cr
&=\binom{n+1}{2(\frac{n-1}{2}-j)+1}\cr
&=\binom{n}{2(\frac{n-1}{2}-j)+1}+\binom{n}{2(\frac{n-1}{2}-j)}\cr
&=\binom{n-1}{2(\frac{n-1}{2}-j)+1}+\binom{n-1}{2(\frac{n-1}{2}-j)}+\binom{n}{2(\frac{n-1}{2}-j)}\cr
&=\binom{n-1}{2(\frac{n-1}{2}-j)+1}-\binom{n-1}{2(\frac{n-1}{2}-j)-1}+2\,\binom{n}{2(\frac{n-1}{2}-j)}, 
\end{split}
\]

which is the right hand side of \eqref{N2}.

Now assume that $k\neq 1$. Then from \eqref{N2} we have
\[
k\,\binom{n-1}{2j+1}-k\,\binom{n-1}{2j-1}+2\,\binom{n}{2j}=k\,\binom{n-1}{2(\frac{n-1}{2}-j)+1}-k\,\binom{n-1}{2(\frac{n-1}{2}-j)-1}+2\,\binom{n}{2(\frac{n-1}{2}-j)}
\]

\[
k\,\binom{n-1}{2j+1}-k\,\binom{n-1}{2j-1}+2\,\binom{n}{2j}=k\,\binom{n-1}{n-2j}-k\,\binom{n-1}{n-2j-2}+2\,\binom{n}{n-1-2j}
\]

\[
k\,\binom{n-1}{2j+1}-k\,\binom{n-1}{2j-1}+2\,\binom{n}{2j}=k\,\binom{n-1}{2j-1}-k\,\binom{n-1}{2j+1}+2\,\binom{n}{2j+1}
\]

\[
k\,\binom{n-1}{2j+1}-k\,\binom{n-1}{2j-1}+\binom{n}{2j}-\binom{n}{2j+1}=0
\]

\[
(k-1)\,\binom{n-1}{2j+1}+(1-k)\,\binom{n-1}{2j-1}=0
\]
which implies 

\[
\binom{n-1}{2j+1}\,=\,\binom{n-1}{2j-1} \,\,\textnormal{for}\,\, 1\leq j\leq \frac{n-1}{2}-1,
\]

which is a contradiction. 

\end{proof}


\section{In Odd Characteristic}

Let $n>1$, $p$ be an odd prime, and $0\leq k\leq p-1$. Consider 

\begin{equation}\label{E3.1}
f_{n,k}(x)=k\,\,\displaystyle\sum_{j\geq 0} \,\,\binom{n-1}{2j+1}\,\,(x^j-x^{j+1})+2\,\,\displaystyle\sum_{j\geq 0}\,\,\binom{n}{2j}\,\,x^j \,\,\in \f_p[x].
\end{equation}

\begin{thm} \label{T3.1}
Assume that $n$ is even. Then $f_{n,k}(x)$ is a self-reciprocal if and only if one of the following holds 

\begin{itemize}
\item[(i)] $k=0$.
\item[(ii)] $k=2$ and $n\neq (2l)p$, where $l\in \mathbb{Z}^+$.
\end{itemize}

\end{thm}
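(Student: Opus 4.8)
The plan is to mimic the analysis of Theorem~\ref{T2.1}, but now reading all binomial coefficients modulo the odd prime $p$, and to track exactly when the coefficient relations forced by self-reciprocity survive reduction. As in the integer case, write $f_{n,k}(x)$ in the expanded form \eqref{E2.2}, note that $\deg f_{n,k}=\tfrac n2$ (this degree statement must be re-examined mod $p$: one needs $2-k\not\equiv 0$, i.e. $k\neq 2$, for the top coefficient to be nonzero, so the two cases $k=2$ and $k\neq 2$ split the argument from the outset), and record the coefficient description \eqref{E2.3}. For $k=0$ the polynomial is $2\sum_j\binom{n}{2j}x^j$, and the symmetry $\binom{n}{2j}=\binom{n}{n-2j}$ is an identity in $\mathbb Z$, hence persists mod $p$; so (i) always works. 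This disposes of one direction for case (i).

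Next I would handle $k=2$. Over $\mathbb Z$ we showed $\tfrac12 f_{n,2}(x)=\sum_{j\geq 0}\binom{n}{2j+1}x^j$, an identity of polynomials in $\mathbb Z[x]$, so it holds in $\f_p[x]$ as well. The coefficients $\binom{n}{2j+1}$ for $0\le j\le \tfrac n2-1$ are symmetric under $j\mapsto (\tfrac n2-1)-j$ as an integer identity, so the \emph{string of coefficients is palindromic regardless of $p$}. The only thing that can go wrong is that the polynomial $\sum_j\binom{n}{2j+1}x^j \bmod p$ has \emph{lower degree} than $\tfrac n2-1$, in which case self-reciprocity is measured against a shorter palindrome and the identity among the surviving coefficients may fail to start and end with nonzero terms. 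Concretely, $f_{n,2}$ is self-reciprocal over $\f_p$ iff the leading coefficient $\binom{n}{n-1}=n$ is nonzero mod $p$, i.e.\ $p\nmid n$; when $p\mid n$ one must look further down. The key computation is: using Lucas' Theorem (Theorem~\ref{T1}) on $\binom{n}{2j+1}$, determine the largest $j\le \tfrac n2-1$ with $\binom{n}{2j+1}\not\equiv 0\pmod p$ and compare it with the smallest such $j$ (the constant term is $\binom n1=n$, already $\equiv 0$). The claim to be extracted is that the palindrome survives \emph{with matching endpoints} precisely when $n$ is not of the form $(2l)p$; I would prove this by writing $n=2m$ and analyzing the base-$p$ digits of $m$ — the failure case is exactly when $m\equiv 0\pmod p$ while... and here the parity/shift by the odd index $2j+1$ interacts with the digits of $n=2m$, so the condition "$n\neq (2l)p$'' must be shown equivalent to "the top surviving coefficient of $\sum\binom{n}{2j+1}x^j$ sits at index $\tfrac n2-1$ or, more precisely, matches the bottom surviving index under the reflection.'' This Lucas bookkeeping is the main obstacle and the only genuinely new content beyond Section~2.

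For the converse, assume $f_{n,k}$ is self-reciprocal over $\f_p$ and $k\notin\{0,2\}$ (as residues mod $p$; recall $0\le k\le p-1$). Then $\deg f_{n,k}=\tfrac n2$ with top coefficient $2-k\ne 0$ and constant term $k(n-1)+2$, so self-reciprocity forces $2-k\equiv k(n-1)+2\pmod p$, i.e.\ $k\,n\equiv 0\pmod p$; since $k\not\equiv 0$ this gives $p\mid n$. Now compare an interior coefficient: for $1\le j\le \tfrac n2-1$, equating the coefficient at $x^j$ with that at $x^{\tfrac n2-j}$ and subtracting the (already known) $k=1$-type identity $\binom{n+1}{2j+1}=\binom{n+1}{n-2j}$ that holds over $\mathbb Z$, I would reduce — exactly as in the proof of Theorem~\ref{T2.7}, equations following \eqref{N2} — to $(k-1)\bigl[\binom{n-1}{2j+1}-\binom{n-1}{2j-1}\bigr]\equiv 0\pmod p$ for all such $j$. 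Since $k\ne 1$ this forces $\binom{n-1}{2j+1}\equiv\binom{n-1}{2j-1}\pmod p$ for every $1\le j\le\tfrac n2-1$; a short Lucas argument (take $j=1$, or the first index where the base-$p$ digits of $n-1$ produce a genuine inequality — available because $n>1$) shows this fails, a contradiction. Hence $k\in\{0,2\}$, and combined with the $k=2$ analysis above we get exactly conditions (i) and (ii). The write-up risk here is making sure the chosen index $j$ where $\binom{n-1}{2j+1}\ne\binom{n-1}{2j-1}$ really exists for all even $n>1$ and all odd $p$; I expect to argue this by examining the lowest base-$p$ digit of $n-1$ together with the range $1\le 2j+1\le n-1$.
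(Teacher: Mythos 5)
Your overall plan follows the paper's: expand $f_{n,k}$ as in \eqref{E2.3}, compare the constant term with the leading coefficient, then compare interior coefficients, using Lucas' theorem where coefficients can vanish mod $p$. Your sufficiency arguments for (i) and for $k=2$ with $p\nmid n$ are correct, and in fact cleaner than the paper's Lucas computations, since the integer palindromy of the coefficient strings together with the nonvanishing of the endpoint coefficients ($2$ and $2$, resp.\ $2n$ and $2n$) already gives self-reciprocity mod $p$. However, the step you yourself flag as ``the main obstacle'' --- excluding $k=2$ when $p\mid n$ --- is left genuinely unfinished (your argument trails off), and it is also far simpler than the digit bookkeeping you propose: when $p\mid n$ the constant term of $f_{n,2}(x)=2\sum_{j=0}^{n/2-1}\binom{n}{2j+1}x^j$ is $2n\equiv 0\pmod p$, and a nonzero polynomial with zero constant term is never self-reciprocal (its reciprocal has strictly smaller degree, equivalently a self-reciprocal polynomial has constant term equal to its nonzero leading coefficient). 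One only needs to add that $f_{n,2}\not\equiv 0\pmod p$, which follows from Lucas: if $p^i$ is the lowest power with nonzero base-$p$ digit of $n$, then $p^i$ is odd and $\binom{n}{p^i}\not\equiv 0\pmod p$. This is exactly the paper's route via \eqref{E3.5}.

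The second gap is an actual error in the converse for $k\notin\{0,2\}$. For even $n$ the reflection is $j\mapsto \tfrac n2-j$, and equating the coefficients of $x^j$ and $x^{\frac n2-j}$ in \eqref{E2.3} cancels the $2\binom{n}{2j}$ terms and yields $k\bigl[\binom{n}{2j+1}-\binom{n}{2j-1}\bigr]\equiv 0\pmod p$; this is the computation of Theorem~\ref{T2.3} (equation \eqref{N1}), not the odd-$n$ computation of Theorem~\ref{T2.7} (equation \eqref{N2}) that you cite, and the factor is $k$, not $k-1$. Indeed your appeal to a ``$k=1$-type identity'' $\binom{n+1}{2j+1}=\binom{n+1}{n-2j}$ is the odd-$n$ symmetry; for even $n$, $f_{n,1}$ is not self-reciprocal even over $\mathbb{Z}$, so there is nothing of that form to subtract. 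As written, your condition $(k-1)\bigl[\binom{n-1}{2j+1}-\binom{n-1}{2j-1}\bigr]\equiv 0$ is vacuous when $k\equiv 1\pmod p$, so your argument cannot exclude $k=1$ with $p\mid n$, a case that really must be excluded: for $p=3$, $n=6$, $k=1$ one has $f_{6,1}(x)\equiv 1+2x+x^3\pmod 3$, which is not self-reciprocal, and your proof does not detect this. With the correct factor $k\not\equiv 0$ you obtain $\binom{n}{2j+1}\equiv\binom{n}{2j-1}\pmod p$ for all $1\le j\le \tfrac n2-1$; since $p\mid n$ gives $\binom n1=n\equiv 0$, all odd-index coefficients $\binom{n}{2j+1}$ would vanish mod $p$, contradicting the Lucas observation above that $\binom{n}{p^i}\not\equiv 0$. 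Once these two repairs are made, your proof coincides in substance with the paper's.
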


\begin{proof}
Let $k=0$. Then 

$$f_{n,k}(x)=2\,\,\displaystyle\sum_{j\geq 0}\,\,\binom{n}{2j}\,\,x^j.$$

We claim that

$$\binom{n}{2j}\equiv \binom{n}{2(\frac{n}{2}-j)}\pmod{p},$$
for $0\leq j\leq \frac{n}{2}$. 

Consider the $p$-adic expansions 

$$n=\alpha_0\,p^0+\alpha_1\,p^1+\alpha_2\,p^2+\cdots +\alpha_t\,p^t,$$

and

$$2j=\beta_0\,p^0+\beta_1\,p^1+\beta_2\,p^2+\cdots +\beta_t\,p^t.$$

Then by Luca's theorem, we have 

\[
\begin{split}
\binom{n}{2j}&\equiv \binom{\alpha_0}{\beta_0}\binom{\alpha_1}{\beta_1}\binom{\alpha_2}{\beta_2}\cdots\binom{\alpha_t}{\beta_t} \pmod{p}\cr
&= \binom{\alpha_0}{\alpha_0-\beta_0}\binom{\alpha_1}{\alpha_1-\beta_1}\binom{\alpha_2}{\alpha_2-\beta_2}\cdots \binom{\alpha_t}{\alpha_t-\beta_t}\cr
&\equiv \binom{n}{2(\frac{n}{2}-j)}\pmod{p}
\end{split}
\]

Then the claim follows from the fact that 

\begin{center}
$\binom{n}{2j}\equiv 0\pmod{p}$ if and only if  there exists an $0\leq l\leq t$ such that $\beta_{l}>\alpha_{l}$ if and only if $\binom{n}{2(\frac{n}{2}-j)}\equiv 0\pmod{p}$. 
\end{center}

Let $k=2$ and $n\neq (2l)p$, where $l\in \mathbb{Z}^+$. Note that $n\neq (2l)p$ implies $p\not\vert (2n)$. 

From \eqref{NEW1}, we have

\begin{equation}\label{NEW27}
\begin{split}
f_{n,k}(x)&=2\,\,\displaystyle\sum_{j\geq 0} \,\,\binom{n-1}{2j+1}\,\,(x^j-x^{j+1})+2\,\,\displaystyle\sum_{j\geq 0}\,\,\binom{n}{2j}\,\,x^j \cr
&=2\,\,\displaystyle\sum_{j\geq 0} \,\,\binom{n}{2j+1}\,\,x^j\cr
\end{split}
\end{equation}

Consider the $p$-adic expansions 

$$n=\alpha_0\,p^0+\alpha_1\,p^1+\alpha_2\,p^2+\cdots +\alpha_t\,p^t,$$

and

$$2j+1=\beta_0\,p^0+\beta_1\,p^1+\beta_2\,p^2+\cdots +\beta_t\,p^t.$$

Then by Luca's theorem, we have 

\[
\begin{split}
\binom{n}{2j+1}&\equiv \binom{\alpha_0}{\beta_0}\binom{\alpha_1}{\beta_1}\binom{\alpha_2}{\beta_2}\cdots\binom{\alpha_t}{\beta_t} \pmod{p}\cr
&= \binom{\alpha_0}{\alpha_0-\beta_0}\binom{\alpha_1}{\alpha_1-\beta_1}\binom{\alpha_2}{\alpha_2-\beta_2}\cdots \binom{\alpha_t}{\alpha_t-\beta_t}\cr
&\equiv \binom{n}{2((\frac{n}{2}-1)-j)+1}\pmod{p}
\end{split}
\]

Then the claim follows from the fact that 

\begin{center}
$\binom{n}{2j+1}\equiv 0\pmod{p}$ if and only if  there exists an $0\leq l\leq t$ such that $\beta_{l}>\alpha_{l}$ if and only if $\binom{n}{2((\frac{n}{2}-1)-j)+1}\equiv 0\pmod{p}$. 
\end{center}

Now assume that $f_{n,k}(x)$ is self-reciprocal. From \eqref{E2.3}, we have

\begin{equation}\label{E3.2}
(k(n-1)+2)+\,\displaystyle\sum_{j=1}^{\frac{n}{2}-1}\,\Big[\,k\,\binom{n-1}{2j+1}-k\,\binom{n-1}{2j-1}+2\,\binom{n}{2j}\Big]\,x^j+(2-k)\,x^{\frac{n}{2}}.
\end{equation}

Since $f_{n,k}(x)$ is self-reciprocal, we have 
$$k(n-1)+2\equiv 2-k \pmod{p}$$
which implies $k=0$ for any even $n$ or $p\vert n$. 

Note that since $n$ is even, $\frac{n}{p}$ is even, so we can write $n=(2l)p$ for some $l\in \mathbb{Z}^+$. 

Now we claim that when $k\neq 0, 2$ and $p\vert n$, it contradicts our assumption that $f_{n,k}(x)$ is self-reciprocal.

Let $k\neq 0,2$ and $p\vert n$. Then from \eqref{E3.2} we have 

\begin{equation}\label{E3.3}
(2-k)+\,\displaystyle\sum_{j=1}^{\frac{n}{2}-1}\,\Big[\,k\,\binom{n-1}{2j+1}-k\,\binom{n-1}{2j-1}+2\,\binom{n}{2j}\Big]\,x^j+(2-k)\,x^{\frac{n}{2}}.
\end{equation}

Now we claim that

\begin{equation}\label{E3.4}
\begin{split}
k\,\binom{n-1}{2j+1}-k\,\binom{n-1}{2j-1}+2\,\binom{n}{2j}&\not \equiv k\,\binom{n-1}{2(\frac{n}{2}-j)+1}-k\,\binom{n-1}{2(\frac{n}{2}-j)-1}+2\,\binom{n}{2(\frac{n}{2}-j)}\cr
\pmod{p}
\end{split}
\end{equation}

for $1\leq j\leq \frac{n}{2}-1$.  

Let 
\[
\begin{split}
k\,\binom{n-1}{2j+1}-k\,\binom{n-1}{2j-1}+2\,\binom{n}{2j}&\equiv k\,\binom{n-1}{2(\frac{n}{2}-j)+1}-k\,\binom{n-1}{2(\frac{n}{2}-j)-1}+2\,\binom{n}{2(\frac{n}{2}-j)}\cr
\pmod{p}
\end{split}
\]

Then by the proof of Theorem~\ref{T2.3}, we have

\[
\binom{n}{2j+1}\,\equiv\,\binom{n}{2j-1} \pmod{p}, 
\]

for $1\leq j\leq \frac{n}{2}-1$, which is clearly a contradiction. 

Now let $k=2$ in \eqref{E3.2}. Then we have

\begin{equation}\label{E3.5}
2n+\,\displaystyle\sum_{j=1}^{\frac{n}{2}-1}\,\Big[\,2\,\binom{n-1}{2j+1}-2\,\binom{n-1}{2j-1}+2\,\binom{n}{2j}\Big]\,x^j. 
\end{equation}

We show that when $k=2$, if $f_{n,k}$ is a self-reciprocal, then $n\neq p(2l)$, i.e. $p\not\vert n$.

When $k=2$, Assume that $n=p(2l)$, i.e. $p\vert n$. Then the constant term in \eqref{E3.5} vanishes, and as a result $f_{n,k}$ is not a self-reciprocal. Hence the proof. 

Note that \eqref{E3.5} can also be written as

\begin{equation}\label{E3.6}
2n+2\,\displaystyle\sum_{j=1}^{\frac{n}{2}-1}\,\,\binom{n}{2j+1}\,\,x^j=2\,\displaystyle\sum_{j=0}^{\frac{n}{2}-1}\,\,\binom{n}{2j+1}\,\,x^j
\end{equation}

\end{proof}

\begin{cor}
If $k=0$ and $n>2$ with $n\equiv 2 \pmod{4}$, then $f_{n,k}(x)$ is not an irreducible self-reciprocal polynomial. 
\end{cor}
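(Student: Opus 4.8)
The plan is to reduce the statement to Lemma~\ref{L1} (the Meyn criterion) via a degree parity count. First I would recall from case~(i) of Theorem~\ref{T3.1} (equivalently, the $k=0$ case of Theorem~\ref{T2.1}) that for $k=0$ the polynomial
$$f_{n,0}(x)=2\,\displaystyle\sum_{j\geq 0}\,\binom{n}{2j}\,x^j\in\f_p[x]$$
is self-reciprocal, so the content of the statement is purely about \emph{irreducibility}. Since we are in odd characteristic, $2$ is a unit in $\f_p$, so no cancellation occurs in the leading term; I would then note that the coefficient of $x^{n/2}$ is $2\binom{n}{n}=2\neq 0$ in $\f_p$, hence $\deg f_{n,0}=\frac n2$.

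Next I would carry out the parity bookkeeping: the hypothesis $n\equiv 2\pmod 4$ means $\frac n2$ is odd, and the hypothesis $n>2$ (together with $n\equiv 2\pmod 4$) forces $n\in\{6,10,14,\dots\}$, hence $\frac n2\geq 3$. Thus $f_{n,0}$ is a self-reciprocal polynomial whose degree $\frac n2$ is odd and satisfies $\frac n2\geq 2$.

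Finally I would invoke Lemma~\ref{L1}: every self-reciprocal irreducible polynomial of degree $\geq 2$ has even degree. Were $f_{n,0}$ irreducible, it would be a self-reciprocal irreducible polynomial of degree $\frac n2\geq 2$, so $\frac n2$ would have to be even, contradicting $n\equiv 2\pmod 4$. Therefore $f_{n,0}$ is not an irreducible self-reciprocal polynomial, which is the claim.

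There is really no serious obstacle here: once the degree is identified as $n/2$ and its parity pinned down by the congruence $n\equiv 2\pmod 4$, the result is an immediate application of Lemma~\ref{L1}. The only point requiring a word of care is that we are working over $\f_p$ with $p$ odd, so that $2$ does not vanish and the degree is genuinely $n/2$ with the polynomial still self-reciprocal; over $\mathbb Z$ the same conclusion holds even more trivially since all coefficients are then divisible by $2$.
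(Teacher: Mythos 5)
Your proposal is correct and follows essentially the same route as the paper: note that $f_{n,0}$ is self-reciprocal by Theorem~\ref{T3.1}(i), that $n\equiv 2\pmod 4$ makes its degree $\frac{n}{2}$ odd (and $\geq 2$ since $n>2$), and conclude by Lemma~\ref{L1}. Your additional check that the leading coefficient $2$ is nonzero in $\f_p$ is a nice touch of rigor the paper leaves implicit, but the argument is the same.
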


\begin{proof}
If $n\equiv 2 \pmod{4}$, then the degree of $f_{n,k}(x)$ is odd. The rest of the proof follows from Theorem~\ref{T3.1} (i) and Lemma~\ref{L1}. 
\end{proof}

\begin{cor}
If $k=2$ and $n\neq (2l)p$ with $n\equiv 0 \pmod{4}$, where $l\in \mathbb{Z}^+$, then $f_{n,k}(x)$ is not an irreducible self-reciprocal polynomial. 
\end{cor}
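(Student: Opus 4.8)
The plan is to follow the template of the proof of the preceding corollary: determine the exact degree of $f_{n,2}(x)$, observe that the congruence $n\equiv 0\pmod 4$ forces that degree to be odd, and then combine Theorem~\ref{T3.1}(ii) with Lemma~\ref{L1}.

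First, by Theorem~\ref{T3.1}(ii) the hypotheses $k=2$ and $n\neq(2l)p$ already guarantee that $f_{n,2}(x)$ \emph{is} self-reciprocal, so the only thing that remains is to show that it is reducible. For this I would use the closed form recorded in the proof of Theorem~\ref{T3.1}, namely \eqref{NEW27} (equivalently \eqref{E3.6}),
\[
f_{n,2}(x)=2\sum_{j=0}^{\frac n2-1}\binom{n}{2j+1}\,x^{j}\in\f_p[x].
\]
Its leading coefficient is $2\binom{n}{n-1}=2n$, and this is nonzero in $\f_p$: since $n$ is even and $p$ is odd, $p\mid n$ would force $n=(2l)p$ for some $l\in\mathbb{Z}^+$, contrary to hypothesis. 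Hence $\deg f_{n,2}(x)=\frac n2-1$.

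Now $n\equiv 0\pmod 4$ means $\frac n2$ is even, so $\frac n2-1$ is odd. Thus $f_{n,2}(x)$ is a self-reciprocal polynomial of odd degree, and by Lemma~\ref{L1} every self-reciprocal irreducible polynomial of degree at least $2$ has even degree; consequently $f_{n,2}(x)$ cannot be an irreducible self-reciprocal polynomial, which is the assertion.

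There is no real obstacle in this argument; the only step needing attention is the degree computation, which is precisely where the hypothesis $n\neq(2l)p$ enters (it keeps the top coefficient $2n$ from collapsing modulo $p$), and one should also note that the conclusion is non-vacuous exactly when $\frac n2-1\geq 2$, i.e. when $n\geq 8$, since Lemma~\ref{L1} speaks only of degrees at least $2$.
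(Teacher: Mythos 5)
Your proposal is correct and follows essentially the same route as the paper: establish self-reciprocality via Theorem~\ref{T3.1}(ii), note that the degree of $f_{n,2}$ is $\frac{n}{2}-1$ (odd when $n\equiv 0\pmod 4$, with the hypothesis $n\neq(2l)p$ keeping the leading coefficient $2n$ nonzero in $\f_p$), and conclude by Lemma~\ref{L1}. The only difference is that you spell out the degree computation that the paper leaves implicit (and you cite part (ii) of Theorem~\ref{T3.1}, where the paper's reference to part (i) appears to be a slip), which is a welcome clarification rather than a different argument.
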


\begin{proof}
If $n\equiv 0 \pmod{4}$, then the degree of $f_{n,k}(x)$ is odd when $k=2$. The rest of the proof follows from Theorem~\ref{T3.1} (i) and Lemma~\ref{L1}. 
\end{proof}

\begin{thm} \label{T3.4}
Assume that $n>0$ is odd. Then $f_{n,k}(x)$ is a self-reciprocal if and only if one of the following holds 

\begin{itemize}
\item[(i)] $n=1$ for any $k$.
\item[(ii)] $k=0$ and $n=p^l$, where $l\in \mathbb{Z}^+$. 
\item[(iii)] $n=3$ and $k=3$ when $p>3$. 
\item[(iv)] $k=1$ and $n+1\neq (2l)p$, where $l\in \mathbb{Z}^+$. 
\end{itemize}

\end{thm}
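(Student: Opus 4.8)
The plan is to pass to generating functions. Put $P(x)=\sum_{j\ge 0}\binom{n-1}{2j}x^{j}$ and $Q(x)=\sum_{j\ge 0}\binom{n-1}{2j+1}x^{j}$; then $f_{n,k}(x)=(k+(2-k)x)Q(x)+2P(x)$, and substituting $x=z^{2}$ gives the closed form
\[
f_{n,k}(z^{2})=\frac{1}{2z}\bigl[(1+z)^{n}(k+(2-k)z)-(1-z)^{n}(k-(2-k)z)\bigr].
\]
From $\binom{n-1}{m}=\binom{n-1}{n-1-m}$, both $P$ (of degree $\tfrac{n-1}{2}$, leading coefficient $1$) and $Q$ (of degree $\tfrac{n-3}{2}$ over $\mathbb Z$, leading coefficient $n-1$) are self-reciprocal, and the coefficient of $x^{(n-1)/2}$ in $f_{n,k}$ is $2n-k(n-1)$. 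One first settles $n=1$, where $f_{1,k}=2$ (case (i)); assume $n>1$ henceforth and split on whether $2n-k(n-1)$ vanishes modulo $p$.

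\emph{Case A: $2n-k(n-1)\not\equiv 0\pmod p$.} Then $\deg f_{n,k}=\tfrac{n-1}{2}$, and the closed form together with the elementary identity $f_{n,k}(z^{-2})=z^{1-n}f_{n,2-k}(z^{2})$ (using $n$ odd) yields $f_{n,k}^{*}=f_{n,2-k}$. Subtracting the two generating functions gives $f_{n,k}(x)-f_{n,2-k}(x)=2(k-1)(1-x)Q(x)$, so $f_{n,k}$ is self-reciprocal iff $k\equiv 1\pmod p$ or $Q\equiv 0\pmod p$. A short Lucas argument excludes $Q\equiv 0$ when $n>1$: if every $\binom{n-1}{2j+1}\equiv 0$, then $\binom{n-1}{1}=n-1\equiv 0$ and each $\binom{n-1}{p^{i}}$ is congruent to the $i$-th base-$p$ digit of $n-1$, forcing $n-1=0$. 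Hence in Case A self-reciprocity means $k=1$, and $k=1$ lies in Case A exactly when $p\nmid n+1$, i.e. $n+1\neq(2l)p$; this is (iv).

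\emph{Case B: $2n-k(n-1)\equiv 0\pmod p$.} Here $k=2$ is impossible, the top coefficient dies mod $p$, and $[x^{0}]f_{n,2-k}=2n-k(n-1)\equiv 0$, so $\operatorname{ord}_{0}f_{n,2-k}\ge 1$. If $k=0$ then $p\mid n$; writing $n=p^{a}m$ with $p\nmid m$ and using $(1+z)^{p^{a}}\equiv 1+z^{p^{a}}$, one gets $f_{n,0}(x)=f_{m,0}(x^{p^{a}})$, and since $g(x^{p^{a}})$ is self-reciprocal iff $g$ is, the question reduces to $f_{m,0}$ with $p\nmid m$; by Case A's computation $f_{m,0}^{*}=f_{m,2}$, and $f_{m,0}=f_{m,2}$ unravels to $(1-z)^{m-1}=(1+z)^{m-1}$ in $\f_p[z]$, which at $z=1$ forces $m=1$. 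So $f_{n,0}$ is self-reciprocal iff $n$ is a power of $p$, i.e. $n=p^{l}$ for odd $n>1$: this is (ii). If $k=1$ then $p\mid n+1$, and $f_{n,1}=\sum_{j}\binom{n+1}{2j+1}x^{j}$ has constant term $n+1\equiv 0$ while being $\not\equiv 0$, hence is not self-reciprocal. The remaining subcase is $3\le k\le p-1$ (so $p\ge 5$), where $n\equiv k(k-2)^{-1}\pmod p$, so $p\nmid n$ and $p\nmid n-1$; a short computation gives $[x^{1}]f_{n,2-k}\equiv \tfrac{2n(n-1)(k-1)}{3}\not\equiv 0$, whence $\operatorname{ord}_{0}f_{n,2-k}=1$, $f_{n,k}^{*}=f_{n,2-k}/x$, and self-reciprocity is the identity $xf_{n,k}(x)=f_{n,2-k}(x)$. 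Feeding this through the closed form and cancelling the common factor $(1+z)(1-z)$, it becomes
\[
(1+z)^{n-1}\bigl((2-k)(z^{2}+1)+2z\bigr)=(1-z)^{n-1}\bigl((2-k)(z^{2}+1)-2z\bigr)\quad\text{in }\f_p[z];
\]
setting $z=-1$ makes the left side $0$ and the right side $2^{n}(3-k)$, forcing $k=3$; then the quadratics become $-(z-1)^{2}$ and $-(z+1)^{2}$, the identity collapses to $(1+z)^{n-3}=(1-z)^{n-3}$, and $z=1$ forces $n=3$. Since $f_{3,3}=8\not\equiv 0\pmod p$ is a nonzero constant, it is self-reciprocal: this is (iii). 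Assembling the cases gives precisely (i)--(iv).

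The step I expect to be hardest is the bookkeeping inside Case B: once the leading coefficient of $f_{n,k}$ vanishes modulo $p$ one must first pin down the true degree of $f_{n,k}$, equivalently $\operatorname{ord}_{0}f_{n,2-k}$, before its reciprocal can be identified, and this is exactly where the hypotheses $p\ge 5$, $p\nmid n$, $p\nmid n-1$ are spent; the ancillary Lucas facts ($Q\not\equiv 0$, $f_{n,1}\not\equiv 0$) and the Frobenius reduction $f_{n,0}(x)=f_{m,0}(x^{p^{a}})$, as well as the boundary cases $n=3$ and $p=3$, also need to be checked with care.
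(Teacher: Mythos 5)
Your argument is correct, and it takes a genuinely different route from the paper's. You work from the closed form $f_{n,k}(z^2)=\tfrac{1}{2z}\bigl[(1+z)^n(k+(2-k)z)-(1-z)^n(k-(2-k)z)\bigr]$, whose key consequence is the duality $x^{(n-1)/2}f_{n,k}(1/x)=f_{n,2-k}(x)$: self-reciprocity becomes $f_{n,k}=f_{n,2-k}$ when the top coefficient $2n-k(n-1)$ survives mod $p$, and $xf_{n,k}=f_{n,2-k}$ when it dies, and these identities are settled by the factorization $f_{n,k}-f_{n,2-k}=2(k-1)(1-x)Q(x)$, evaluations at $z=\pm1$, and the Frobenius reduction $f_{n,0}(x)=f_{m,0}(x^{p^a})$. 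The paper instead argues coefficientwise from \eqref{E3.7}: Lucas' theorem digit by digit gives sufficiency in (ii) and (iv), and necessity comes from comparing the constant term with the coefficients of $x^{\frac{n-1}{2}}$ and $x^{\frac{n-1}{2}-1}$ under the same dichotomy on $2n-k(n-1)$ as your Cases A/B. Your route buys a uniform, arguably tighter necessity argument: all $k\neq1$ in Case A are excluded at once, sufficiency of (iv) is immediate from $f^*_{n,1}=f_{n,1}$ with no Lucas computation, and in Case B every $3\le k\le p-1$ is handled uniformly by pinning the true degree via $[x^1]f_{n,2-k}$ and forcing $k=3$, $n=3$ at $z=\mp1$, whereas the paper compares only two coefficient pairs and enumerates a few values of $k$. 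What the paper's route buys is elementary digit arguments that are reused for the coterm results later. The two facts you assert without full proof do check out: $f_{n,1}=\sum_j\binom{n+1}{2j+1}x^j\not\equiv0$ follows from the same digit argument as $Q\not\equiv0$ (some $\binom{n+1}{p^i}$ equals a nonzero base-$p$ digit of $n+1$), and under $k(n-1)\equiv2n\pmod p$ one gets $[x^1]f_{n,2-k}=(2-k)\binom{n-1}{3}+k(n-1)+2\binom{n-1}{2}\equiv 2n(n+1)/3\equiv 2n(n-1)(k-1)/3\not\equiv0$ for $p\ge5$, so the degree bookkeeping in Case B closes as intended.
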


\begin{proof}

From Remark~\ref{R11}, we have $f_{1,k}(x)=2$ for all $k$. When $p>3$, $f_{3,3}(x)=8$. 

Let $k=0$ and $n=p^l$ in \eqref{E3.1}, where $l\in \mathbb{Z}^+$. Then 

$$f_{n,k}(x)=2\,\,\displaystyle\sum_{j\geq 0}\,\,\binom{n}{2j}\,\,x^j=2.$$

Now let $k=1$ in \eqref{E3.1} and assume that  $n+1\neq (2l)p$, where $l\in \mathbb{Z}^+$. Then from Theorem~\ref{T2.4} we have 

\[
\begin{split}
f_{n,k}(x)&=\sum_{j\geq 0}\,\binom{n+1}{2j+1}\,x^{j}
\end{split}
\] 

Note that $n+1\neq (2l)p$ implies $p\not\vert (n+1)$. 

Consider the $p$-adic expansions 

$$n+1=\alpha_0\,p^0+\alpha_1\,p^1+\alpha_2\,p^2+\cdots +\alpha_t\,p^t,$$

and

$$2j+1=\beta_0\,p^0+\beta_1\,p^1+\beta_2\,p^2+\cdots +\beta_t\,p^t.$$

Then by Luca's theorem, we have 

\[
\begin{split}
\binom{n+1}{2j+1}&\equiv \binom{\alpha_0}{\beta_0}\binom{\alpha_1}{\beta_1}\binom{\alpha_2}{\beta_2}\cdots\binom{\alpha_t}{\beta_t} \pmod{p}\cr
&= \binom{\alpha_0}{\alpha_0-\beta_0}\binom{\alpha_1}{\alpha_1-\beta_1}\binom{\alpha_2}{\alpha_2-\beta_2}\cdots \binom{\alpha_t}{\alpha_t-\beta_t}\cr
&\equiv \binom{n+1}{2(\frac{n-1}{2}-j)+1}\pmod{p}
\end{split}
\]

Then the claim follows from the fact that 

\begin{center}
$\binom{n+1}{2j+1}\equiv 0\pmod{p}$ if and only if  there exists an $0\leq l\leq t$ such that $\beta_{l}>\alpha_{l}$ if and only if $ \binom{n+1}{2(\frac{n-1}{2}-j)+1}\equiv 0\pmod{p}$. 
\end{center}

Now assume that $f_{n,k}(x)$ is a self-reciprocal. From \eqref{E2.4} we have

\begin{equation}\label{E3.7}
(k(n-1)+2)+\,\displaystyle\sum_{j=1}^{\frac{n-1}{2}-1}\,\Big[\,k\,\binom{n-1}{2j+1}-k\,\binom{n-1}{2j-1}+2\,\binom{n}{2j}\Big]\,x^j+(-k(n-1)+2n)\,\,x^{\frac{n-1}{2}}.
\end{equation}

\textbf{Case 1}. Here we consider the case where the constant term and the coefficient of $x^{\frac{n-1}{2}}$ being non-zero. 

Since $f_{n,k}$ is a self-reciprocal, we have 
$$k(n-1)+2\equiv -k(n-1)+2n \pmod{p},$$

which implies 

$$(k-1)(n-1)\equiv 0 \pmod{p}.$$

Since $n$ is odd, we have $n=1$ for any $k\neq 1$. 

Let $k=1$ in \eqref{E3.7} to obtain

\begin{equation}\label{E3.8}
\begin{split}
&(n+1)+\,\displaystyle\sum_{j=1}^{\frac{n-1}{2}-1}\,\Big[\binom{n-1}{2j+1}-\binom{n-1}{2j-1}+2\,\binom{n}{2j}\Big]\,x^j+(n+1)\,\,x^{\frac{n-1}{2}}\cr
&=\displaystyle\sum_{j=0}^{\frac{n-1}{2}}\,\binom{n+1}{2j+1}\,x^j.
\end{split}
\end{equation}

Since \eqref{E3.8} is a self-reciprocal, we have either $n=1$, in which case $f_{n,k}(x)$ is a constant polynomial, or $p\not \vert (n+1)$ which implies $n+1\neq (2l)p$, where $l\in \mathbb{Z}^+$. Hence we have (i) and (iv). 

\textbf{Case 2}. Here we consider the case where the constant term is non-zero, but coefficient of $x^{\frac{n-1}{2}}$ is zero in \eqref{E3.7}, i.e.

\begin{equation}\label{E3.9}
k(n-1)+2\not\equiv 0 \pmod{p}\,\,\,\,\textnormal{and}\,\,\,\,-k(n-1)+2n\equiv 0\pmod{p}.
\end{equation}

$-k(n-1)+2n\equiv 0\pmod{p}$ implies $n\not\equiv1\pmod{p}$ and $k\neq 2$.  

From $-k(n-1)+2n\equiv 0\pmod{p}$ we have 

\begin{equation}\label{E3.10}
n\equiv \frac{k}{k-2} \pmod{p},
\end{equation}

which implies 

\begin{itemize}
\item [(a)] $k=0$ for any $p$, 
\item [(b)] $k=3$ when $p>3$,  
\item [(c)] $k=1$ for any $p$, or 
\item [(d)] $k=4$ when $p>3$.
\end{itemize}

If $k=0$, since $n$ is an odd, we have $n=p^l$, where $l\in \mathbb{Z}^+$. Note that in this case $f_{n,k}(x)=2$. 

If $k=3$ and $p>3$, we have $n\equiv 3\pmod{p}$. Since $n$ is odd, $n=3$. Note that in this case $f_{n,k}(x)=8$. 

If $k=1$, then it contradicts \eqref{E3.9}. 

The coefficient of $x^{\frac{n-1}{2}-1}$ in \eqref{E3.7} is 

$$k\,\binom{n-1}{1}-k\,\binom{n-1}{3}+2\,\binom{n}{3}.$$

Assume that $k=4$ when $p>3$. When $k=4$, from \eqref{E3.10}, $n\equiv 2 \pmod{p}$. Then the coefficient of $x^{\frac{n-1}{2}-1}$ is

\[
\begin{split}
4\,\binom{n-1}{1}-4\,\binom{n-1}{3}+2\,\binom{n}{3}&=4\,\binom{n}{2}-2\,\binom{n}{3}\equiv 4 \pmod{p}
\end{split}
\]

But the constant term in \eqref{E3.7} is 

$$k(n-1)+2=4(n-1)+2\equiv 6 \pmod{p}.$$

This contradicts our assumption that $f_{n,k}(x)$ is a self-reciprocal. 

Hence the proof. 

\end{proof}

\begin{cor}
If $k=1$ and $n+1\neq (2l)p$ with $n\equiv 3 \pmod{4}$, where $l\in \mathbb{Z}^+$, then $f_{n,k}(x)$ is not an irreducible self-reciprocal polynomial. 
\end{cor}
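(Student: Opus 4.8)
The plan is to follow the template of the two preceding corollaries: extract self-reciprocality of $f_{n,1}(x)$ from Theorem~\ref{T3.4}, determine its degree exactly, and then rule out irreducibility via the parity obstruction of Lemma~\ref{L1}.

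First I would observe that the hypotheses $k=1$ and $n+1\neq (2l)p$ (with $l\in\mathbb{Z}^+$) are exactly case (iv) of Theorem~\ref{T3.4}, so $f_{n,1}(x)$ is self-reciprocal over $\f_p$. Next I would pin down its degree: by the computation in the proof of Theorem~\ref{T2.4}, $f_{n,1}(x)=\sum_{j\geq 0}\binom{n+1}{2j+1}x^j$, whose leading term is the coefficient of $x^{\frac{n-1}{2}}$, namely $\binom{n+1}{n}=n+1$. Since $n+1\neq (2l)p$ forces $p\nmid(n+1)$, this coefficient is nonzero in $\f_p$, so $\deg f_{n,1}=\frac{n-1}{2}$. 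Finally, $n\equiv 3\pmod 4$ gives $n-1\equiv 2\pmod 4$, so $\frac{n-1}{2}$ is odd; since $f_{n,1}$ is then a self-reciprocal polynomial of odd degree, Lemma~\ref{L1} forbids it from being irreducible, which is the assertion.

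The argument is essentially immediate, so there is no serious obstacle; the one point to watch is the hypothesis ``degree $\geq 2$'' in Lemma~\ref{L1}, which requires $\frac{n-1}{2}\geq 2$, i.e. $n\geq 7$ among the residues $n\equiv 3\pmod 4$. For $n=3$ one has $f_{3,1}(x)=4+4x$, which up to the unit $4$ is the linear, hence irreducible, self-reciprocal polynomial $x+1$; so I would state the corollary for $n>3$ and flag this low-degree exception, in keeping with the convention about trivial low-degree polynomials noted in the introduction.
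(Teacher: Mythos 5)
Your proof is correct and takes essentially the same route as the paper: self-reciprocality from Theorem~\ref{T3.4}(iv), odd degree $\frac{n-1}{2}$ from $n\equiv 3\pmod 4$, and the parity obstruction of Lemma~\ref{L1}. Your added care---verifying $p\nmid(n+1)$ so the degree is exactly $\frac{n-1}{2}$, and flagging $n=3$, where the degree is $1$ and Lemma~\ref{L1} does not apply---is a sensible refinement that the paper's one-line proof leaves implicit.
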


\begin{proof}
If $n\equiv 3 \pmod{4}$, then the degree of $f_{n,k}(x)$ is odd when $k=1$. The rest of the proof follows from Theorem~\ref{T3.4} (iv) and Lemma~\ref{L1}. 
\end{proof}


\section{In Characteristic 2}

When $p=2$, \eqref{E2.1} becomes 

\begin{equation}\label{E4.1}
f_{n,k}(x)=k\,\,\displaystyle\sum_{j\geq 0} \,\,\binom{n-1}{2j+1}\,\,(x^j-x^{j+1}) \,\,\in \f_2[x].
\end{equation}

Note that we only need to consider $k=1$. We note to the reader that when $p=2$, the polynomials defined by $f_{n,k}$ do not arise from reversed Dickson polynomials. 

\begin{thm}
Let $n>1$ and $k=1$. Then $f_{n,k}(x)$ is a self-reciprocal if and only if $n$ is even. 
\end{thm}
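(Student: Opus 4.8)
The plan is to first reduce $f_{n,1}$ modulo $2$ to a single binomial sum, and then to read off self-reciprocity (or its failure) from Lucas' Theorem according to the parity of $n$.

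\textbf{Step 1 (simplification).} In $\f_2$ we have $2=0$ and $-1=1$, so \eqref{E4.1} reads $f_{n,1}(x)=\sum_{j\geq0}\binom{n-1}{2j+1}(x^{j}+x^{j+1})$. Shifting the index in the second sum and applying Pascal's rule twice --- the same computation as in the chain \eqref{NEW1} and in the proof of Theorem~\ref{T2.4}, except that the integer term $2\sum_{j\geq0}\binom{n}{2j}x^{j}$ now vanishes --- gives $f_{n,1}(x)\equiv\sum_{j\geq0}\binom{n+1}{2j+1}x^{j}\pmod 2$. Equivalently, working in $\f_2[y]$, this is the ``odd part'' of $(1+y)^{n+1}$: writing $(1+y)^{n+1}=A(y^{2})+y\,B(y^{2})$, one has $B(x)=\sum_{j\geq 0}\binom{n+1}{2j+1}x^{j}$, and the identity $(1+y)^{2}=1+y^{2}$ in $\f_2[y]$ will control $B$.

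\textbf{Step 2 ($n$ odd).} Then $n+1$ is even, so its last base-$2$ digit is $0$, while $2j+1$ has last base-$2$ digit $1$ for every $j$; by the lemma following Lucas' Theorem, $\binom{n+1}{2j+1}\equiv 0\pmod 2$ for all $j$, hence $f_{n,1}(x)=0$ in $\f_2[x]$. (From the $\f_2[y]$ viewpoint: $n+1=2m$ forces $(1+y)^{n+1}=(1+y^{2})^{m}$, which has no odd-degree terms, so $B=0$.) The zero polynomial is not self-reciprocal, so $f_{n,1}$ fails the property when $n$ is odd.

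\textbf{Step 3 ($n$ even).} Put $m=n/2$, so $n+1=2m+1$ has last base-$2$ digit $1$, and for each $j$ the base-$2$ digits of $2j+1$ are the digit $1$ in position $0$ together with the digits of $j$ in the higher positions; Lucas' Theorem then gives $\binom{n+1}{2j+1}\equiv\binom{m}{j}\pmod 2$. Therefore $f_{n,1}(x)\equiv\sum_{j\geq0}\binom{m}{j}x^{j}=(1+x)^{n/2}\pmod 2$. Its leading coefficient $\binom{m}{m}=1$ is nonzero, its degree is $n/2$, and $\binom{m}{j}=\binom{m}{m-j}$ shows the coefficient string is palindromic, so $f_{n,1}(x)$ is self-reciprocal. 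Steps~2 and~3 together prove the theorem. I do not anticipate a genuine obstacle; the only point deserving a word of care is that the $n$ odd case degenerates to the zero polynomial, which is excluded by the convention on self-reciprocal polynomials adopted in the introduction (non-zero constants are allowed, $0$ is not).
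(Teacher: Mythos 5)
Your proof is correct and follows essentially the same route as the paper: the same reduction of $f_{n,1}$ modulo $2$ to $\sum_{j}\binom{n+1}{2j+1}x^{j}$, the same observation that odd $n$ makes every coefficient vanish (so the zero polynomial rules out self-reciprocity), and Lucas' Theorem for the even case. The only difference is the endgame: the paper verifies that the mod-$2$ vanishing pattern of the coefficients is symmetric, while you collapse $\binom{n+1}{2j+1}\equiv\binom{n/2}{j}\pmod{2}$ to get $f_{n,1}\equiv(1+x)^{n/2}$, a slightly cleaner finish that makes the palindromicity immediate (and, as a bonus, makes the subsequent corollary on non-irreducibility transparent, since $(1+x)^{n/2}$ visibly factors for $n>2$).
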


\begin{proof}
Necessity immediately follows from the fact that $\binom{n-1}{2j+1}=0$ when $n$ is odd. 
For the sufficiency, assume that $n$ is even. Then from \eqref{E4.1} we have

\begin{equation}\label{E4.2}
\begin{split}
f_{n,k}(x)&=\displaystyle\sum_{j\geq 0} \,\,\binom{n-1}{2j+1}\,\,x^j+\displaystyle\sum_{j\geq 0} \,\,\binom{n-1}{2j+1}\,\,x^{j+1}\cr
&=1+\displaystyle\sum_{j=1}^{\frac{n}{2}-1}\Big[\binom{n-1}{2j+1}+\binom{n-1}{2j-1}\Big]\,x^j+x^{\frac{n}{2}}
\end{split}
\end{equation}

Note that 

\[
\begin{split}
\binom{n-1}{2j+1}+\binom{n-1}{2j-1}&=\binom{n-1}{2j+1}+\binom{n-1}{2j}+\binom{n-1}{2j}+\binom{n-1}{2j-1}\cr
&=\binom{n}{2j+1}+\binom{n}{2j}\cr
&=\binom{n+1}{2j+1}
\end{split}
\]

Then \eqref{E4.2} can be written as 

\begin{equation}\label{E4.3}
\begin{split}
f_{n,k}(x)&=\displaystyle\sum_{j=0}^{\frac{n}{2}}\,\,\binom{n+1}{2j+1}\,\,x^j
\end{split}
\end{equation}

For $0\leq j\leq \frac{n}{2}$, we have 

$$\binom{n+1}{2j+1}=\binom{n+1}{2(\frac{n}{2}-j)+1}.$$

Now we claim that for $1\leq j\leq \frac{n}{2}-1$, If $\binom{n+1}{2j+1}$ vanishes, then so does $\binom{n+1}{2(\frac{n}{2}-j)+1}$.

Consider the $2$-adic expansions 

$$n+1=\alpha_0\,2^0+\alpha_1\,2^1+\alpha_2\,2^2+\cdots +\alpha_t\,2^t,$$

and

$$2j+1=\beta_0\,2^0+\beta_1\,2^1+\beta_2\,2^2+\cdots +\beta_t\,2^t.$$

Note that $\alpha_0=1$ and $\beta_0=1$ since $n+1$ and $2j+1$ are odd. Then 

$$n-2j+1=2^0+(\alpha_1-\beta_1)\,2^1+(\alpha_2-\beta_2)\,2^2+\cdots +(\alpha_t-\beta_t)\,2^t.$$

Then by Luca's theorem, we have 

$$\binom{n+1}{2j+1}\equiv \binom{\alpha_0}{\beta_0}\binom{\alpha_1}{\beta_1}\cdots\binom{\alpha_t}{\beta_t} \pmod{2},$$

and 

$$\binom{n+1}{2(\frac{n}{2}-j)+1}\equiv \binom{\alpha_0}{1}\binom{\alpha_1}{\alpha_1-\beta_1}\cdots \binom{\alpha_t}{\alpha_t-\beta_t} \pmod{2}.$$

Then the claim follows from the fact that 

\begin{center}
$\binom{n+1}{2j+1}\equiv 0\pmod{2}$ if and only if  there exists an $1\leq l\leq t$ such that $\beta_{l}>\alpha_{l}$ if and only if $\binom{n+1}{2(\frac{n}{2}-j)+1}\equiv 0\pmod{2}$. 
\end{center}

\end{proof}

\begin{cor}
If $n>2$ with $n\equiv 2 \pmod{4}$, then $f_{n,k}(x)$ is not an irreducible self-reciprocal polynomial. 
\end{cor}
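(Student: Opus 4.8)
The plan is to mirror the two corollaries already proved in Section~3 and reduce the statement entirely to the theorem just established together with Lemma~\ref{L1}. First I would note that $n\equiv 2\pmod 4$ forces $n$ to be even. Since the only value of $k$ under consideration in this section is $k=1$, the theorem applies verbatim and tells us that $f_{n,k}(x)$ \emph{is} self-reciprocal; hence the entire content of the corollary is that $f_{n,k}(x)$ fails to be irreducible.

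Next I would pin down the degree of $f_{n,k}(x)$ using the closed form \eqref{E4.3}, namely $f_{n,k}(x)=\sum_{j=0}^{n/2}\binom{n+1}{2j+1}x^j$. The coefficient of $x^{n/2}$ is $\binom{n+1}{n+1}=1\neq 0$ in $\f_2$, so $\deg f_{n,k}(x)=n/2$. Because $n\equiv 2\pmod 4$, the integer $n/2$ is odd, and because $n>2$ we have $n/2\geq 3$; in particular $n/2\geq 2$. I would then invoke Lemma~\ref{L1}: a self-reciprocal irreducible polynomial of degree $\geq 2$ must have even degree. Since $f_{n,k}(x)$ is a self-reciprocal polynomial of odd degree $n/2\geq 2$, it cannot be irreducible, and therefore it is not an irreducible self-reciprocal polynomial.

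There is essentially no obstacle here; the argument is a direct corollary of the preceding theorem and Lemma~\ref{L1}. The only points deserving a moment's care are checking that the leading coefficient in \eqref{E4.3} is genuinely nonzero in $\f_2$ (it equals $1$), and verifying that the hypothesis $n>2$ is precisely what raises the degree $n/2$ to at least $2$, so that Lemma~\ref{L1} is actually applicable.
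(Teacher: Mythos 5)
Your proposal is correct and follows essentially the same route as the paper: the hypothesis $n\equiv 2\pmod 4$ (with $n>2$) makes $\deg f_{n,k}=n/2$ odd and at least $2$, and Lemma~\ref{L1} then rules out $f_{n,k}$ being an irreducible self-reciprocal polynomial. The extra details you supply (the leading coefficient $\binom{n+1}{n+1}=1$ in $\f_2$ from \eqref{E4.3}, and the role of $n>2$ in making the degree at least $2$) are exactly the checks the paper leaves implicit.
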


\begin{proof}
If $n\equiv 2 \pmod{4}$, then the degree of $f_{n,k}(x)$ is odd. The rest of the proof follows from Lemma~\ref{L1}. 
\end{proof}

\begin{rmk}
Note that when $n=2$, $f_{n,k}=x+1$ which is irreducible. 
\end{rmk}


\section{Coterm Polynomials}

\subsection{Introduction}

Coterm polynomials were introduced by Oztas, Siap, and Yildiz in \cite{OSY-2014}. They studied DNA codes over an extension ring of $\f_2+u\f_2$ with the use of coterm polynomials. 

Let $R$ be a commutative ring with identity. 

\begin{defn}(See \cite{OSY-2014}) Let $f(x)=a_0+a_1x+\cdots +a_{n-1}x^{n-1} \in R[x]/(x^{n}-1)$ be a polynomial, with $a_i\in R$. If for all $1\leq i\leq \lfloor \frac{n}{2} \rfloor$, we have $a_i=a_{n-i}$, then $f(x)$ is said to be a coterm polynomial over $R$. 
\end{defn}

According to the definition $f(x)=a_0+a_1x+\cdots +a_{n-1}x^{n-1}$ is a coterm polynomial in $R[x]/(x^{n}-1)$ if and only if $(a_1,a_2,\ldots ,a_{n-1})$ is self-reversible. 

The classical way of constructing a reversible code is to find a self-reciprocal divisor of $x^n-1$ and construct the cyclic code generated by that divisor. However,  Oztas, Siap, and Yildiz explained a new way to construct reversible codes using coterm polynomials. We refer the reader to  \cite{OSY-2014} and the references therein for further details. 

\subsection{Coterm Polynomials from reversed Dickson polynomials}

Clearly, If $f(x)=a_0+a_1x+a_2x^2+\cdots +a_nx^n$, $a_n\neq 0$, is a self-reciprocal polynmial, then the removal of the term $a_nx^n$ from $f(x)$ gives a coterm polynomial. Using the above fact and the self-reciprocal polynomials obtained in the previous sections, we have the following results. 

Consider 

\begin{equation}
f_{n,k}(x)=k\,\,\displaystyle\sum_{j\geq 0} \,\,\binom{n-1}{2j+1}\,\,(x^j-x^{j+1})+2\,\,\displaystyle\sum_{j\geq 0}\,\,\binom{n}{2j}\,\,x^j \,\,\in \mathbb{Z}[x].
\end{equation}

\begin{thm}\label{T5.1}
Let $n\geq 4$ be even. Define
$$C_{n,k}(x):=f_{n,k}(x)-2x^{\frac{n}{2}}.$$
If $k=0$, then $C_{n,k}(x)$ is a coterm polynomial over $\mathbb{Z}$. 
\end{thm}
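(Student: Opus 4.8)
The plan is to reduce the statement directly to the already-established self-reciprocal property of $f_{n,0}(x)$ from Theorem~\ref{T2.1}. First I would recall that when $k=0$ the polynomial is
\[
f_{n,0}(x)=2\displaystyle\sum_{j\geq 0}\binom{n}{2j}x^j,
\]
which by Theorem~\ref{T2.1} is self-reciprocal of degree $\frac{n}{2}$, with leading coefficient $2\binom{n}{n}=2$. Thus $C_{n,0}(x)=f_{n,0}(x)-2x^{\frac{n}{2}}$ is obtained from a self-reciprocal polynomial by deleting its top-degree term, exactly the situation described in the remark preceding the theorem.

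Next I would write out the coefficients explicitly. We have
\[
C_{n,0}(x)=2\displaystyle\sum_{j=0}^{\frac{n}{2}-1}\binom{n}{2j}x^j,
\]
so in the notation $C_{n,0}(x)=a_0+a_1x+\cdots+a_{\frac n2-1}x^{\frac n2-1}$ we get $a_i=2\binom{n}{2i}$ for $0\leq i\leq \frac n2-1$. To check the coterm condition we must view $C_{n,0}(x)$ as an element of $\mathbb{Z}[x]/(x^{N}-1)$ for the appropriate $N$; here $N=\frac n2$, so the condition to verify is $a_i=a_{\frac n2-i}$ for $1\leq i\leq \lfloor \frac{n}{4}\rfloor$, i.e. $\binom{n}{2i}=\binom{n}{n-2i}=\binom{n}{2(\frac n2-i)}$, which holds by the symmetry of binomial coefficients. (Note that the index $\frac n2-i$ ranges over $1,\dots,\frac n2-1$, so each $a_{\frac n2-i}$ is a genuine coefficient of $C_{n,0}$ and no wraparound issue arises; this is why the hypothesis $n\geq 4$ is imposed, guaranteeing the reduced polynomial is nonconstant.) I would also remark that the constant term $a_0=2$ plays no role in the coterm definition.

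The main (and only mild) obstacle is bookkeeping about which ambient quotient ring $R[x]/(x^N-1)$ is intended and making sure the index range in the coterm definition lines up with the symmetry $\binom{n}{2j}=\binom{n}{n-2j}$; once that identification is made, the verification is immediate from Theorem~\ref{T2.1}. I would therefore present the proof as: invoke Theorem~\ref{T2.1} to get self-reciprocity of $f_{n,0}$, observe that removing the leading term $2x^{n/2}$ yields a polynomial whose remaining coefficient vector is self-reversible, and conclude by the characterization recalled just before the theorem that $C_{n,0}(x)$ is a coterm polynomial over $\mathbb{Z}$.
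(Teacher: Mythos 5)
Your proposal is correct and follows essentially the same route as the paper: invoke Theorem~\ref{T2.1} for the self-reciprocity of $f_{n,0}$, note from \eqref{E2.3} that the leading coefficient is $2-k=2$ so that subtracting $2x^{\frac n2}$ removes exactly the top term, and conclude by the removal-of-leading-term observation preceding the theorem. Your explicit check $a_i=2\binom{n}{2i}=2\binom{n}{2(\frac n2-i)}$ in $\mathbb{Z}[x]/(x^{\frac n2}-1)$ just spells out the details the paper leaves implicit.
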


\begin{proof}
If $k=0$, then $f_{n,k}(x)$ is a self-reciprocal (see Theorem~\ref{T2.1}). The rest of the proof follows from \eqref{E2.3}. 
\end{proof}

\begin{thm}
Let $n\geq 6$ be even. Define
$$C_{n,k}(x):=f_{n,k}(x)-2n\,x^{\frac{n}{2}-1}.$$
If $k=2$, then $C_{n,k}(x)$ is a coterm polynomial over $\mathbb{Z}$. 
\end{thm}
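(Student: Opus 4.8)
The plan is to mimic the proof of Theorem~\ref{T5.1} and reduce the claim about the coterm polynomial $C_{n,k}(x)=f_{n,2}(x)-2n\,x^{\frac{n}{2}-1}$ to the self-reciprocal structure already established. Recall from Theorem~\ref{T2.1} that for $k=2$ and $n>1$ even, $f_{n,2}(x)$ is a self-reciprocal polynomial of degree $\frac{n}{2}$, and by \eqref{NEW27} (or \eqref{NEW1}) it equals $2\sum_{j\geq 0}\binom{n}{2j+1}x^j$. Thus $C_{n,k}(x)=f_{n,2}(x)-2n\,x^{\frac{n}{2}-1}$, and since the coefficient of $x^{\frac{n}{2}-1}$ in $f_{n,2}$ is $2\binom{n}{n-1}=2n$, subtracting $2n\,x^{\frac{n}{2}-1}$ kills that coefficient; together with removing nothing else this leaves a polynomial whose nonzero coefficients run over degrees $0,1,\dots,\frac{n}{2}-2$ and $\frac{n}{2}$.

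First I would write out $C_{n,k}(x)$ explicitly using \eqref{E2.3} with $k=2$: the constant term is $k(n-1)+2=2n$, the coefficient of $x^{\frac{n}{2}}$ is $2-k=0$ — wait, that gives $0$, so I should instead use the representation \eqref{NEW27} where $f_{n,2}(x)=2\sum_{j=0}^{\frac n2 -1}\binom{n}{2j+1}x^j$, a polynomial of degree $\frac n2 -1$ with leading coefficient $2\binom{n}{n-1}=2n$. Hmm — this means $f_{n,2}$ actually has degree $\frac n2 - 1$, not $\frac n2$, so $C_{n,k}(x)=f_{n,2}(x)-2n\,x^{\frac n2 -1}$ removes the top term and leaves $2\sum_{j=0}^{\frac n2 -2}\binom{n}{2j+1}x^j$. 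The point established in the text (``removal of the term $a_n x^n$ from a self-reciprocal polynomial gives a coterm polynomial'') then applies directly: in $R[x]/(x^m-1)$ with $m=\frac n2$, viewing $C_{n,k}$ as having coefficients indexed $0,\dots,m-1$, the self-reciprocal symmetry $\binom{n}{2j+1}=\binom{n}{2((\frac n2 -1)-j)+1}$ for $0\le j\le \frac n2 -1$ established in the proof of Theorem~\ref{T2.1} gives $a_i=a_{m-i}$ for $1\le i\le \lfloor m/2\rfloor$ once the top coefficient has been stripped.

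So the key steps, in order, are: (1) invoke Theorem~\ref{T2.1} to get that $f_{n,2}(x)$ is self-reciprocal and recall from \eqref{NEW27} its explicit form $2\sum_{j\ge 0}\binom{n}{2j+1}x^j$ of degree $\frac n2 -1$; (2) compute its leading coefficient $2\binom{n}{n-1}=2n$ so that $C_{n,k}(x)=f_{n,2}(x)-2n\,x^{\frac n2 -1}$ is exactly $f_{n,2}$ with its top term deleted; (3) apply the general principle stated just before Theorem~\ref{T5.1} that deleting the top term of a self-reciprocal polynomial yields a coterm polynomial, with the ambient modulus being $x^{n/2}-1$. The hypothesis $n\ge 6$ ensures $\frac n2 -1\ge 2$ so that the coterm polynomial is nontrivial (it has at least the terms of degree $0$ and $1$, and the symmetry condition is nonvacuous).

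The only genuine obstacle is bookkeeping about which representation of $f_{n,2}$ to use and hence what ``degree $\frac n2$'' versus ``degree $\frac n2 -1$'' means here — in the $\mathbb Z$ setting $f_{n,2}(x)=2\sum_{j=0}^{\frac n2 -1}\binom{n}{2j+1}x^j$ genuinely has degree $\frac n2 -1$, so the subtraction in the statement is the ``remove the leading term'' operation and the result follows immediately from the displayed principle; I would just make the indexing explicit and cite the symmetry $\binom{n}{2j+1}=\binom{n}{n-1-2j}$ used in Theorem~\ref{T2.1}. No delicate estimate or new idea is required beyond faithfully transcribing the self-reciprocal structure already proved.
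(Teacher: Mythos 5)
Your argument is correct in substance and follows the same route as the paper's (very terse) proof: invoke Theorem~\ref{T2.1} for $k=2$, use the explicit form $f_{n,2}(x)=2\sum_{j=0}^{\frac n2-1}\binom{n}{2j+1}x^j$ from \eqref{NEW1}/\eqref{E2.3}, note the leading coefficient is $2\binom{n}{n-1}=2n$ in degree $\frac n2-1$, and apply the stated principle that stripping the leading term of a self-reciprocal polynomial yields a coterm polynomial. The one slip is your bookkeeping of the ambient ring: you take the modulus to be $x^{n/2}-1$ (length $m=\frac n2$), but the correct length is $\deg f_{n,2}=\frac n2-1$, i.e.\ the coterm polynomial lives in $\mathbb{Z}[x]/(x^{\frac n2-1}-1)$. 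With length $\frac n2$ the coterm condition would require, e.g., $a_1=a_{\frac n2-1}$, comparing $2\binom{n}{3}$ with the stripped (zero) coefficient, and more generally $2\binom{n}{2i+1}$ with $2\binom{n}{2i-1}$, which fails; with length $\frac n2-1$ the symmetry you yourself cite, $\binom{n}{2j+1}=\binom{n}{2((\frac n2-1)-j)+1}$, gives exactly $a_i=a_{(\frac n2-1)-i}$, which is what is needed. Correct that off-by-one and the proof is complete and matches the paper's.
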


\begin{proof}
If $k=2$, then $f_{n,k}(x)$ is a self-reciprocal (see Theorem~\ref{T2.1}). The rest of the proof follows from \eqref{E2.3}. 
\end{proof}

\begin{thm}
Let $n\geq 4$ be even. Define
$$C_{n,k}(x):=g_{n,k}(x)-2x^{\frac{n}{2}},$$
where $g_{n,k}(x)$ is the polynomial defined in \eqref{G}. 
If $k=0$, then $C_{n,k}(x)$ is a coterm polynomial over $\mathbb{Z}$. 
\end{thm}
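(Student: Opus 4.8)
The plan is to reduce this statement to Theorem~\ref{T2.3} combined with the explicit formula \eqref{G} for $g_{n,k}$. First I would recall that by Theorem~\ref{T2.3}, when $k=0$ the polynomial $g_{n,k}(x)$ is self-reciprocal; indeed, setting $k=0$ in \eqref{G} gives
\[
g_{n,0}(x)=2+\displaystyle\sum_{j=1}^{\frac{n}{2}-1}2\,\binom{n}{2j}\,x^j+2\,x^{\frac{n}{2}},
\]
whose coefficient sequence is palindromic because $\binom{n}{2j}=\binom{n}{n-2j}=\binom{n}{2(\frac{n}{2}-j)}$ for $0\le j\le \frac n2$. So $g_{n,0}$ is a self-reciprocal polynomial of degree $\frac n2$ with leading coefficient $2$.

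Next I would invoke the elementary observation stated at the top of Subsection~5.2: if $f(x)=a_0+a_1x+\cdots+a_mx^m$ with $a_m\neq 0$ is self-reciprocal, then deleting the top term $a_mx^m$ leaves a polynomial whose coefficient vector $(a_0,a_1,\ldots,a_{m-1})$ is still palindromic in the sense required of a coterm polynomial, i.e. $a_i=a_{m-i}$ for $1\le i\le \lfloor\frac{m-1}{2}\rfloor$ (the constant term $a_0$ is now unconstrained, which is exactly what the definition of coterm polynomial allows). Applying this with $m=\frac n2$ and $a_m=2$ to $g_{n,0}$, we see that
\[
C_{n,0}(x)=g_{n,0}(x)-2x^{\frac n2}=2+\displaystyle\sum_{j=1}^{\frac n2-1}2\,\binom{n}{2j}\,x^j
\]
is a coterm polynomial over $\mathbb{Z}$, viewed in $\mathbb{Z}[x]/(x^{\frac n2}-1)$ (or in whatever quotient of the appropriate degree one wishes to regard it). The hypothesis $n\ge 4$ ensures $\frac n2\ge 2$ so that there is at least one interior coefficient and the statement is not vacuous; it also guarantees $g_{n,0}$ genuinely has degree $\frac n2$.

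There is essentially no obstacle here: the entire content is bookkeeping, namely matching the index ranges in \eqref{G} against the definition of a coterm polynomial and checking that removing the leading term of a self-reciprocal polynomial yields the coterm symmetry. The one point worth stating carefully is that the symmetry $a_i=a_{m-i}$ of the self-reciprocal polynomial $g_{n,0}$ restricts, upon deletion of $a_mx^m$, to precisely the condition $a_i=a_{(m)-i}$ for $1\le i\le\lfloor\frac m2\rfloor$ demanded of a coterm polynomial of the same length, since the only pair involving the removed index $m$ was $(a_0,a_m)$ and the coterm definition does not constrain $a_0$. Hence the proof amounts to: (1) $k=0\Rightarrow g_{n,k}$ self-reciprocal by Theorem~\ref{T2.3}; (2) the general principle that a self-reciprocal polynomial minus its leading term is coterm; (3) apply (2) to $g_{n,0}$ with leading term $2x^{n/2}$.
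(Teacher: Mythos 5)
Your proposal is correct and follows essentially the same route as the paper: invoke Theorem~\ref{T2.3} to get that $g_{n,0}$ is self-reciprocal, then apply the observation stated at the start of Subsection 5.2 that deleting the leading term $2x^{n/2}$ of a self-reciprocal polynomial leaves a coterm polynomial, with the explicit form \eqref{G} confirming the coefficient symmetry $2\binom{n}{2j}=2\binom{n}{2(\frac{n}{2}-j)}$. The only difference is that you spell out the bookkeeping the paper leaves implicit.
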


\begin{proof}
If $k=0$, then $f_{n,k}(x)$ is a self-reciprocal (see Theorem~\ref{T2.3}). The rest of the proof follows from \eqref{G}. 
\end{proof}

\begin{thm}
Let $n>3$ be odd. Define
$$C_{n,k}(x):=f_{n,k}(x)-(n+1)x^{\frac{n-1}{2}}.$$
If $k=1$, then $C_{n,k}(x)$ is a coterm polynomial over $\mathbb{Z}$. 
\end{thm}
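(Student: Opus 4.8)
The plan is to reduce this statement to Theorem~\ref{T2.4} together with the elementary fact, already recorded in the preamble of Section~5, that removing the leading term $a_n x^n$ from a self-reciprocal polynomial of degree $n$ produces a coterm polynomial. First I would invoke Theorem~\ref{T2.4}: since $n>3$ is odd and $k=1$, $f_{n,k}(x)$ is self-reciprocal. Moreover, from the computation in the proof of Theorem~\ref{T2.4} (or directly from \eqref{E2.4} with $k=1$), we know $f_{n,k}(x)=\sum_{j\geq 0}\binom{n+1}{2j+1}x^j$, so its degree is $\frac{n-1}{2}$ and its leading coefficient — the coefficient of $x^{\frac{n-1}{2}}$ — equals $\binom{n+1}{n}=n+1$.

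Next I would simply subtract that leading term: $C_{n,k}(x)=f_{n,k}(x)-(n+1)x^{\frac{n-1}{2}}$ has degree at most $\frac{n-1}{2}-1$, and its coefficient sequence $(a_0,a_1,\ldots,a_{\frac{n-1}{2}-1})$ is exactly the coefficient sequence of $f_{n,k}$ with the top entry deleted. Because $f_{n,k}$ is self-reciprocal of degree $m:=\frac{n-1}{2}$, we have $a_i=a_{m-i}$ for all $0\le i\le m$; deleting $a_m$ leaves a palindromic string $(a_0,\ldots,a_{m-1})$ with $a_0=a_{m-1}$, $a_1=a_{m-2}$, and so on. Viewing $C_{n,k}(x)$ in $\mathbb{Z}[x]/(x^m-1)$, the coterm condition $a_i=a_{m-i}$ for $1\le i\le\lfloor m/2\rfloor$ is precisely this palindromy (with the constant term $a_0$ playing the exempt role), so $C_{n,k}(x)$ is a coterm polynomial over $\mathbb{Z}$.

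There is essentially no obstacle here; the content is bookkeeping, and the only point requiring a moment's care is making the index shift match the definition of a coterm polynomial — namely that in $R[x]/(x^N-1)$ one requires $a_i=a_{N-i}$ for $1\le i\le\lfloor N/2\rfloor$, and one must take the modulus $N$ to be $m=\frac{n-1}{2}$ (the degree of $f_{n,k}$) rather than $n$, so that the surviving coefficients of $C_{n,k}$ occupy positions $0$ through $m-1$ and their palindromic symmetry about the center is exactly the required condition. The hypothesis $n>3$ is only there to guarantee $m\ge 2$, so that $C_{n,k}$ is a genuinely nonconstant object and the sum $\sum_{j=1}^{\frac{n-1}{2}-1}$ in \eqref{E2.4} is nonempty; I would note this in passing. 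Thus the proof is a one-line appeal: apply Theorem~\ref{T2.4} and then strip the leading term using \eqref{E2.4}.
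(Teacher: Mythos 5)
Your proof is correct and follows essentially the same route as the paper: invoke Theorem~\ref{T2.4} for the self-reciprocality of $f_{n,1}$, read off from \eqref{E2.4} that the coefficient of $x^{\frac{n-1}{2}}$ is $n+1$, and apply the observation at the start of Section 5 that deleting the leading term of a self-reciprocal polynomial yields a coterm polynomial, with modulus taken to be its degree $m=\frac{n-1}{2}$. The only nit is the intermediate claim that after deleting $a_m$ the string satisfies $a_0=a_{m-1}$, $a_1=a_{m-2}$ --- self-reciprocality gives $a_i=a_{m-i}$, not that --- but this slip is harmless, since the identities you actually use for the coterm condition, $a_i=a_{m-i}$ for $1\le i\le\lfloor m/2\rfloor$, were already correctly established in the preceding sentence.
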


\begin{proof}
If $k=1$, then $f_{n,k}(x)$ is a self-reciprocal (see Theorem~\ref{T2.4}). The rest of the proof follows from \eqref{E2.4}. 
\end{proof}

\begin{thm}
Let $n> 3$ be odd. Define
$$C_{n,k}(x):=g_{n,k}^*(x)-(n+1)x^{\frac{n-1}{2}},$$
where $g_{n,k}^*(x)$ is the polynomial defined in \eqref{G*}. 
If $k=1$, then $C_{n,k}(x)$ is a coterm polynomial over $\mathbb{Z}$.  
\end{thm}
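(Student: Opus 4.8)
The plan is to reduce this statement to Theorem~\ref{T2.7} in essentially the same way the previous coterm theorems reduce to their source self-reciprocal results. First I would recall that for $n>1$ odd and $k=1$, Theorem~\ref{T2.7} guarantees that $g_{n,k}^*(x)$ is self-reciprocal, so by the elementary observation stated at the beginning of Section~5, deleting the leading term of a self-reciprocal polynomial yields a coterm polynomial. The only thing that needs verifying is that the leading term of $g_{1,k}^*$, namely the coefficient of $x^{\frac{n-1}{2}}$, is exactly $(n+1)x^{\frac{n-1}{2}}$ when $k=1$.

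The key computation is therefore to substitute $k=1$ into the definition \eqref{G*} of $g_{n,k}^*$. The coefficient of $x^{\frac{n-1}{2}}$ there is $-k(n-1)+2n$, which at $k=1$ equals $-(n-1)+2n = n+1$. Likewise the constant term is $-k(n-1)+2n = n+1$ at $k=1$, so $g_{n,1}^*$ has matching constant and leading coefficients, consistent with its self-reciprocity. Hence $C_{n,k}(x) = g_{n,k}^*(x) - (n+1)x^{\frac{n-1}{2}}$ removes precisely the leading term, leaving a polynomial of degree at most $\frac{n-1}{2}-1$ whose coefficient sequence in positions $1,\dots,\frac{n-1}{2}-1$ is the same symmetric sequence $\big[k\binom{n-1}{2j+1}-k\binom{n-1}{2j-1}+2\binom{n}{2j}\big]$ that appears symmetrically in $g_{n,1}^*$; symmetry of these middle coefficients (for $1 \le j \le \frac{n-1}{2}-1$) is exactly the content of \eqref{N2} with $k=1$, already established in the proof of Theorem~\ref{T2.7}. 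Reading this against the coterm definition, with $R=\mathbb{Z}$ and working modulo $x^n-1$ (here $\deg C_{n,k} < n$, so no reduction is needed), the condition $a_i = a_{n-i}$ for $1 \le i \le \lfloor n/2\rfloor$ holds because the nonzero $a_i$ all sit in the symmetric middle block and the top coefficient has been zeroed out.

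I would then just note the hypothesis $n>3$ odd ensures $\frac{n-1}{2}\ge 2$, so that the sum in \eqref{G*} is non-empty and $C_{n,k}$ is not merely a constant, making the statement non-vacuous. No real obstacle is expected here: the argument is a direct transcription of the pattern used for Theorems~\ref{T5.1} and its companions, and all the genuine work (the self-reciprocity of $g_{n,k}^*$ at $k=1$ and the identity \eqref{N2}) is already done in Section~2. The one point requiring a line of care is confirming that $k=1$ makes the constant and leading terms of $g_{n,k}^*$ coincide with each other and with $n+1$, so that subtracting $(n+1)x^{\frac{n-1}{2}}$ is the correct normalization rather than, say, subtracting the constant term; this is immediate from the displayed formula for $g_{n,k}^*$.
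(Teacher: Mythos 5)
Your proposal is correct in substance and follows the paper's own route: Theorem~\ref{T2.7} gives self-reciprocity of $g_{n,k}^*$ at $k=1$, the formula \eqref{G*} shows the constant and leading coefficients are both $-k(n-1)+2n=n+1$, and the observation at the start of Section 5 (deleting the leading term of a self-reciprocal polynomial yields a coterm polynomial) finishes the argument. One bookkeeping slip in your final verification should be fixed, though: the coterm structure is with respect to the length $\frac{n-1}{2}=\deg g_{n,k}^*$, i.e.\ $C_{n,k}$ is viewed in $\mathbb{Z}[x]/(x^{\frac{n-1}{2}}-1)$ with the symmetry $a_i=a_{\frac{n-1}{2}-i}$, not "modulo $x^n-1$" with $a_i=a_{n-i}$ where $n$ is the Dickson index. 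With length $n$ the condition would in fact fail, since it pairs the nonzero middle coefficients $a_i=\binom{n+1}{2i+1}$, $1\le i\le \frac{n-1}{2}-1$, with the zero coefficients $a_{n-i}$; for instance $n=7$, $k=1$ gives $C_{7,1}=8+56x+56x^2$, which is coterm of length $3$ but not of length $7$. With that correction your argument coincides with the paper's proof.
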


\begin{proof}
If $k=1$, then $f_{n,k}(x)$ is a self-reciprocal (see Theorem~\ref{T2.7}). The rest of the proof follows from \eqref{G*}. 
\end{proof}

Let's consider 

\begin{equation}
f_{n,k}(x)=k\,\,\displaystyle\sum_{j\geq 0} \,\,\binom{n-1}{2j+1}\,\,(x^j-x^{j+1})+2\,\,\displaystyle\sum_{j\geq 0}\,\,\binom{n}{2j}\,\,x^j \,\,\in \f_p[x],
\end{equation}

where $p$ is an odd prime and $0\leq k\leq p-1$.

\begin{thm}\label{T5.7NEW}
Let $n\geq 4$ be even. Define
$$C_{n,k}(x):=f_{n,k}(x)-2\,x^{\frac{n}{2}}.$$
If $k=0$ and $w_p(n)\neq 2$, where $w_p(n)$ is the base $p$ weight of $n$, then $C_{n,k}(x)$ is a coterm polynomial over $\f_p$. 
\end{thm}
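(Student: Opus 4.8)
The plan is to reduce the claim to the analysis already carried out in Theorem~\ref{T3.1}. Since $k=0$, equation~\eqref{E3.1} collapses to $f_{n,k}(x)=2\sum_{j\geq 0}\binom{n}{2j}x^j$, a polynomial over $\f_p$. First I would compute its degree: the leading term is $2\binom{n}{2\cdot(n/2)}x^{n/2}=2x^{n/2}$, which is nonzero in $\f_p$ since $p$ is odd, so $\deg f_{n,k}=n/2$ and the subtraction $C_{n,k}(x)=f_{n,k}(x)-2x^{n/2}$ exactly removes the top term. What remains is $C_{n,k}(x)=2\sum_{j=0}^{n/2-1}\binom{n}{2j}x^j$, equivalently the polynomial in \eqref{E3.2} with $k=0$, namely $2+\sum_{j=1}^{n/2-1}[2\binom{n}{2j}]x^j$ regarded modulo $x^n-1$ (or simply as an element of $\f_p[x]$ of degree $<n$).

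Next I would invoke the general principle stated just before Theorem~\ref{T5.1}: if $f(x)=a_0+\cdots+a_nx^n$ with $a_n\neq0$ is self-reciprocal, then deleting $a_nx^n$ yields a coterm polynomial. By Theorem~\ref{T3.1}(i), $f_{n,k}(x)$ is self-reciprocal over $\f_p$ when $k=0$, with no extra hypothesis needed; hence $a_i\equiv a_{n/2-i}\pmod p$ for $0\le i\le n/2$, and in particular $a_i\equiv a_{n-i}$ after re-indexing the coefficient vector to length $n$ (the missing middle coefficients are zero). Therefore $C_{n,k}(x)$, whose coefficient sequence is $(a_1,\dots,a_{n-1})$ padded appropriately, is self-reversible, i.e.\ a coterm polynomial over $\f_p$. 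The only thing left is to justify the hypothesis $w_p(n)\neq 2$: this condition should guarantee that $C_{n,k}(x)$ is genuinely of length $n$ in the quotient $\f_p[x]/(x^n-1)$ and, more importantly, that it is not the zero polynomial after removing the leading term — equivalently that $f_{n,k}(x)$ is not merely $2+2x^{n/2}$. Indeed $2\binom{n}{2j}\equiv0\pmod p$ for all $1\le j\le n/2-1$ would force, via Lucas' theorem, the base-$p$ digits of $n$ to be extremely restricted; tracking this, $f_{n,k}(x)=2+2x^{n/2}$ happens precisely when $n$ has base-$p$ weight $2$ (two digits equal to $1$, all others $0$, in positions consistent with $n$ even), so $w_p(n)\neq2$ is exactly the condition that makes $C_{n,k}$ a nontrivial coterm polynomial rather than the constant $2$.

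Concretely, the steps in order are: (1) specialize \eqref{E3.1} at $k=0$ and record $\deg f_{n,k}=n/2$ with leading coefficient $2\neq0$ in $\f_p$; (2) cite Theorem~\ref{T3.1}(i) for self-reciprocity of $f_{n,k}$ over $\f_p$; (3) apply the general deletion principle (the sentence preceding Theorem~\ref{T5.1}) to conclude $C_{n,k}(x)=f_{n,k}(x)-2x^{n/2}$ is coterm; (4) use Lucas' theorem (Theorem~\ref{T1}) together with $w_p(n)\neq2$ to verify that $C_{n,k}$ is not the trivial constant polynomial, so the statement is non-vacuous. The main obstacle is step~(4): pinning down the exact relationship between the base-$p$ weight of $n$ and the vanishing of the intermediate coefficients $\binom{n}{2j}$ for $1\le j\le n/2-1$ requires a careful Lucas-theorem digit argument, distinguishing the cases according to which binary-like pattern the base-$p$ digits of $n$ form. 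Everything else is routine once the degree count and the citation of Theorem~\ref{T3.1} are in place.
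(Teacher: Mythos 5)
Your proposal is correct and follows essentially the same route as the paper: specialize \eqref{E3.1} at $k=0$, cite Theorem~\ref{T3.1}(i) for self-reciprocity over $\f_p$, and apply the deletion principle stated before Theorem~\ref{T5.1}, using \eqref{E3.2}. Your reading of $w_p(n)\neq 2$ as a non-degeneracy condition matches the paper, which treats the excluded case $w_p(n)=2$ (where $C_{n,k}(x)\equiv 2\pmod p$) in the remark immediately following the theorem, via the same Lucas-digit argument you sketch.
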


\begin{proof}
If $k=0$, then $f_{n,k}(x)$ is a self-reciprocal (see Theorem~\ref{T3.1}). The rest of the proof follows from \eqref{E3.2}. 
\end{proof}

\begin{rmk}
Let $w_p(n)=2$ in Theorem~\ref{T5.7NEW}. From \eqref{E3.2}, we have 

\begin{equation}\label{E5.3}
\begin{split}
C_{n,k}(x)&=f_{n,k}(x)-2\,x^{\frac{n}{2}}\cr
&=2\,\displaystyle\sum_{j=0}^{\frac{n}{2}-1}\,\binom{n}{2j}\,x^j\cr
&=2+\displaystyle\sum_{j=1}^{\frac{n}{2}-1}\,\binom{n}{2j}\,x^j.
\end{split}
\end{equation}

Assume that $1\leq j\leq \frac{n}{2}-1$, i.e. $2\leq 2j\leq n-2$.

Consider the $p$-adic expansions 

$$n=\alpha_0\,p^0+\alpha_1\,p^1+\alpha_2\,p^2+\cdots +\alpha_t\,p^t,$$

and

$$2j=\beta_0\,p^0+\beta_1\,p^1+\beta_2\,p^2+\cdots +\beta_t\,p^t.$$

Then by Luca's theorem, we have 

\[
\begin{split}
\binom{n}{2j}&\equiv \binom{\alpha_0}{\beta_0}\binom{\alpha_1}{\beta_1}\binom{\alpha_2}{\beta_2}\cdots\binom{\alpha_t}{\beta_t} \pmod{p}
\end{split}
\]

We claim that there exists $0\leq i\leq t$ such that $\alpha_i<\beta_i$ for all $1\leq j\leq \frac{n}{2}-1$.

Assume to the contrary $\alpha_i\geq \beta_i$ for all $0\leq i\leq t$. 

Since $w_p(n)=2$, we have 
$$\alpha_0+\alpha_1+\alpha_2+\cdots +\alpha_t=2.$$

Since $\alpha_i\geq \beta_i$ for all $0\leq i\leq t$, we have 
$$\beta_0+\beta_1+\beta_2+\cdots +\beta_t\leq 2.$$

If $\beta_0+\beta_1+\beta_2+\cdots +\beta_t=0$, then it contradicts the fact that $2j\geq 2$. 

If $\beta_0+\beta_1+\beta_2+\cdots +\beta_t=1$, then there esists an $0\leq i\leq t$ such that $2j=p^i$, a contradiction. 

If $\beta_0+\beta_1+\beta_2+\cdots +\beta_t=2$, then it contradicts the fact that $2j\leq n-2$. 

Hence $$\binom{n}{2j}\equiv 0\pmod{p}\,\,\,\textnormal{for all}\,\,\, 1\leq j\leq \frac{n}{2}-1.$$

From \eqref{E5.3}, we have 
$$C_{n,k}(x)\equiv 2\pmod{p}.$$

\end{rmk}

\begin{thm}\label{T5.7}
Let $n\geq 6$ be even. Define
$$C_{n,k}(x):=f_{n,k}(x)-2n\,x^{\frac{n}{2}-1}.$$
If $k=2$, $n\neq (2l_1)p$, where $l_1\in \mathbb{Z}^+$, and $n\neq p^{l_2}+1$, where $l_2\in \mathbb{Z}^+$, then $C_{n,k}(x)$ is a coterm polynomial over $\f_p$. 
\end{thm}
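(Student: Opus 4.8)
The plan is to mimic the structure of the earlier coterm theorems but to handle the two extra side conditions that appear here. Concretely, by Theorem~\ref{T3.1}(ii), the hypothesis $k=2$ together with $n\neq(2l_1)p$ guarantees that $f_{n,k}(x)$ is a self-reciprocal polynomial, and by \eqref{NEW27} (equivalently \eqref{E3.5}--\eqref{E3.6}) it equals $2\sum_{j=0}^{n/2-1}\binom{n}{2j+1}x^j$, a polynomial of degree $\tfrac n2-1$ whose leading coefficient is $2\binom{n}{n-1}=2n$. So subtracting $2n\,x^{n/2-1}$ kills the top term, and what remains is automatically a coterm polynomial provided the degree was genuinely $\tfrac n2-1$, i.e. provided $2n\not\equiv 0\pmod p$ — which is exactly the content of $n\neq(2l_1)p$. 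The role of the extra hypothesis $n\neq p^{l_2}+1$ is the subtle point: it is there to rule out the degenerate case in which the polynomial $2\sum_{j=0}^{n/2-1}\binom{n}{2j+1}x^j$ collapses so far (by Lucas' theorem) that removing $2n\,x^{n/2-1}$ would leave something trivial or would break the coterm symmetry among the surviving middle coefficients.

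First I would record, from \eqref{E3.6}, that
$$C_{n,k}(x)=f_{n,k}(x)-2n\,x^{\frac n2-1}=2\,\sum_{j=0}^{\frac n2-1}\binom{n}{2j+1}x^j-2n\,x^{\frac n2-1}=2\,\sum_{j=0}^{\frac n2-2}\binom{n}{2j+1}x^j,$$
using $\binom{n}{n-1}=n$. Next I would invoke the self-reciprocity of $f_{n,k}$ (Theorem~\ref{T3.1}(ii)): since $f_{n,k}(x)=\sum_{i=0}^{n/2-1}a_i x^i$ with $a_i=a_{n/2-1-i}$ and $a_0=2n\not\equiv0$, the truncated polynomial $C_{n,k}$ has coefficient sequence $(a_1,\dots,a_{n/2-2})$ which is palindromic by the same identity $a_i=a_{n/2-1-i}$; hence $C_{n,k}$ is a coterm polynomial in the sense of the definition, as soon as we know its relevant coefficients are correctly aligned. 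The genuinely new work is to see \emph{why} we must exclude $n=p^{l_2}+1$. For such $n$ we have $n-1=p^{l_2}$, so by Lucas' theorem $\binom{n}{2j+1}\equiv0\pmod p$ for every $j$ with $1\le j\le \tfrac n2-2$ (because then $2j+1$ has a base-$p$ digit exceeding the corresponding digit of $n$, as $n=p^{l_2}+1$ has base-$p$ digits $1,0,\dots,0,1$), leaving $C_{n,k}(x)\equiv 2\pmod p$, a constant, which is the analogue of the degenerate situation flagged in the remark after Theorem~\ref{T5.7NEW}. So I would prove: if $n\neq p^{l_2}+1$ (and $n\geq 6$ even), then at least one middle coefficient $\binom{n}{2j+1}$, $1\le j\le\tfrac n2-2$, is nonzero mod $p$, so that $C_{n,k}$ is a genuine (nonconstant) coterm polynomial.

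For that last claim the argument runs through Lucas/Kummer exactly as in the earlier proofs: write the base-$p$ expansion $n=\alpha_0+\alpha_1 p+\cdots+\alpha_t p^t$; I want an odd integer $m=2j+1$ with $1\le j\le \tfrac n2-2$ (equivalently $3\le m\le n-3$) all of whose base-$p$ digits are $\le$ the corresponding digits of $n$. Since $n$ is even, $\alpha_0$ is even; if $\alpha_0\geq 2$ then $m=3$ works (as $3\le n-3$ once $n\ge 6$), giving $\binom{n}{3}\not\equiv0$. If $\alpha_0=0$, then the lowest nonzero digit $\alpha_s$ ($s\ge1$) can be used to build a suitable $m=p^s+1\le n-3$ unless $n$ is itself forced to equal $p^s+1$ — which is precisely the excluded case — so some middle $\binom{n}{2j+1}$ survives. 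The main obstacle I anticipate is exactly this case analysis on the base-$p$ digits of $n$: one has to check that whenever $n\neq p^{l_2}+1$ there really is a valid odd exponent $m$ in the admissible window $[3,n-3]$ with digits dominated by those of $n$, and to treat carefully the boundary cases where $n$ is small relative to $p$ or has weight $2$ with its digits in awkward positions. Everything else — the identity \eqref{E3.6}, the palindromic symmetry of the coefficients, the reduction $\binom{n}{n-1}=n$ — is routine given the results already established.
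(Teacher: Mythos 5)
Your core argument is exactly the paper's: by Theorem~\ref{T3.1}(ii) the hypotheses $k=2$ and $n\neq (2l_1)p$ make $f_{n,2}$ self-reciprocal of degree $\frac n2-1$ with leading (and constant) coefficient $2n\not\equiv 0\pmod p$ (cf.\ \eqref{NEW27}, \eqref{E3.2}, \eqref{E3.6}), and deleting the leading term of a self-reciprocal polynomial yields a coterm polynomial; that already proves the stated theorem, and it is the same route the paper takes. The extra portion of your write-up — proving that $n\neq p^{l_2}+1$ forces some middle coefficient $\binom{n}{2j+1}$ to survive mod $p$ — is not needed for the statement (a constant is vacuously coterm under the paper's definition; the paper only records the degenerate case $n=p^{l_2}+1$, where $C_{n,k}\equiv 2$, in the remark following the theorem), and as sketched it has a slip: for $p>3$ the choice $m=3$ requires $\alpha_0\geq 3$, not $\alpha_0\geq 2$ (e.g. $n=12$, $p=5$ has $\binom{12}{3}\equiv 0$), and your case analysis omits $\alpha_0=2$. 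If you want that converse, a cleaner choice when $n\geq p$ is $m=n-p^t$ (top digit position): it is digit-dominated, odd since $n$ is even and $p^t$ odd, and lies in $[3,n-3]$ precisely because $n\neq p^t+1$; for $n<p$ take $m=3$. None of this affects the validity of your proof of the theorem as stated.
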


\begin{proof}
If $k=2$ and $n\neq (2l_1)p$, where $l_1\in \mathbb{Z}^+$, then $f_{n,k}(x)$ is a self-reciprocal (see Theorem~\ref{T3.1}). The rest of the proof follows from \eqref{E3.2}. 
\end{proof}

\begin{rmk}
Let $n=p^{l_2}+1$, where $l_2\in \mathbb{Z}^+$, in Theorem~\ref{T5.7}. From \eqref{NEW27}, we have 
\[
\begin{split}
C_{n,k}(x)&=f_{n,k}(x)-2n\,x^{\frac{n}{2}-1}\cr
&=2\,\displaystyle\sum_{j=0}^{\frac{n}{2}-2}\,\binom{n}{2j+1}\,x^j\cr
&\equiv 2 \pmod{p}.
\end{split}
\]
\end{rmk}

\begin{thm}\label{T5.9}
Let $n>3$ be odd. Define
$$C_{n,k}(x):=f_{n,k}(x)-(n+1)x^{\frac{n-1}{2}}.$$
If $k=1$, $n+1\neq (2l_1)p$, where $l_1\in \mathbb{Z}^+$, and $n\neq p^{l_2}$, where $l_2\in \mathbb{Z}^+$, then $C_{n,k}(x)$ is a coterm polynomial over $\f_p$.
\end{thm}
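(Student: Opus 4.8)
The plan is to reduce the claim to what has already been established. By Theorem~\ref{T3.4}, when $n>1$ is odd, $k=1$, and $n+1\neq(2l)p$, the polynomial $f_{n,k}(x)\in\f_p[x]$ is self-reciprocal; under the additional hypotheses $n>3$ and $n\neq p^{l_2}$ we must check that it is a \emph{genuine} degree-$\frac{n-1}{2}$ self-reciprocal polynomial, i.e. that its leading coefficient (and hence its constant term) is nonzero mod $p$. Once that is known, the general observation recorded at the start of Section~5 applies: removing the top-degree term $a_{(n-1)/2}x^{(n-1)/2}$ from a self-reciprocal polynomial $a_0+a_1x+\cdots+a_{(n-1)/2}x^{(n-1)/2}$ leaves a polynomial whose coefficient sequence $(a_1,\dots,a_{(n-1)/2})$ still satisfies $a_i=a_{(n-1)/2-i}$, which is exactly the coterm condition for a polynomial of formal degree $\frac{n-1}{2}$ in $\f_p[x]/(x^{(n-1)/2}-1)$ (the constant term $a_0$ no longer being constrained). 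Since, by \eqref{E3.8}, we have $f_{n,1}(x)=\sum_{j=0}^{(n-1)/2}\binom{n+1}{2j+1}x^j$ with leading coefficient $\binom{n+1}{n}=n+1$, the polynomial $C_{n,k}(x)=f_{n,k}(x)-(n+1)x^{(n-1)/2}$ is precisely that truncation.

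The steps, in order, are as follows. First I would invoke Theorem~\ref{T3.4}(iv) to record that $f_{n,1}(x)$ is self-reciprocal under the stated hypotheses, and rewrite it via \eqref{E3.8} as $\sum_{j=0}^{(n-1)/2}\binom{n+1}{2j+1}x^j$. Second, I would verify that the leading coefficient $n+1\not\equiv 0\pmod p$: this follows from $n+1\neq(2l_1)p$, which forbids $p\mid(n+1)$ outright. Third, I would record that $C_{n,k}(x)=f_{n,k}(x)-(n+1)x^{(n-1)/2}=\sum_{j=0}^{(n-1)/2-1}\binom{n+1}{2j+1}x^j$, and that the self-reciprocity of $f_{n,k}$ forces $\binom{n+1}{2j+1}=\binom{n+1}{2(\frac{n-1}{2}-j)+1}$ for $0\le j\le\frac{n-1}{2}$, hence in particular $a_i=a_{(n-1)/2-i}$ for $1\le i\le\frac{n-1}{2}-1$, which is the coterm condition. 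Finally, I would note that the role of the hypothesis $n\neq p^{l_2}$ is to guarantee that the coterm polynomial is nontrivial: if $n=p^{l_2}$ then by Lucas' theorem $\binom{n+1}{2j+1}\equiv 0\pmod p$ for all $1\le j\le\frac{n-1}{2}-1$ (since $2j+1$ would have a base-$p$ digit exceeding the corresponding digit of $n+1=p^{l_2}+1$, whose only nonzero digits are the units digit and the $p^{l_2}$ digit), collapsing $C_{n,k}$ to the constant $2$; the parallel remark after Theorem~\ref{T5.7} can be adapted verbatim.

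The only real content beyond bookkeeping is the digit argument showing that $n=p^{l_2}$ is exactly the degenerate case, and that for $n$ odd with $n\neq p^{l_2}$ and $n>3$ the truncated polynomial has at least one nonzero intermediate coefficient. The main obstacle I anticipate is making the Lucas-theorem digit count clean: one needs that $n+1=p^{l_2}+1$ has base-$p$ digits $(1,0,\dots,0,1)$ and that every odd $2j+1$ with $2\le 2j\le n-3$ has some digit too large — and then, conversely, that when $n\neq p^{l_2}$ one can always exhibit a value of $j$ in range with $\binom{n+1}{2j+1}\not\equiv 0\pmod p$, so that $C_{n,k}$ genuinely has a term of positive degree. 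This is the same flavor of argument already carried out in the remarks following Theorems~\ref{T5.7NEW} and~\ref{T5.7}, so it should go through by the same method, but it does require care about the edge cases near $j=1$ and $j=\frac{n-1}{2}-1$.
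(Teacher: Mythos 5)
Your proposal is correct and takes essentially the same route as the paper: invoke Theorem~\ref{T3.4}(iv) for self-reciprocity of $f_{n,1}$, write it via \eqref{E3.8} as $\sum_{j=0}^{(n-1)/2}\binom{n+1}{2j+1}x^j$, and truncate the leading term to get the coterm property, which is exactly what the paper does through \eqref{E3.7}. One minor slip in your aside about the excluded case $n=p^{l_2}$: there $C_{n,k}$ collapses to the constant $1$ (since the constant term is $\binom{n+1}{1}=p^{l_2}+1\equiv 1\pmod{p}$), not $2$, as in the paper's remark following the theorem; this does not affect the proof of the stated result, since the coterm condition itself never requires the intermediate coefficients to be nonzero.
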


\begin{proof}
If $k=1$ and $n+1\neq (2l_1)p$, where $l_1\in \mathbb{Z}^+$, then $f_{n,k}(x)$ is a self-reciprocal (see Theorem~\ref{T3.4}). The rest of the proof follows from \eqref{E3.7}. 
\end{proof}

\begin{rmk}
Let $n=p^{l_2}$, where $l_2\in \mathbb{Z}^+$, in Theorem~\ref{T5.9}. From \eqref{E3.7}, we have 
\[
\begin{split}
C_{n,k}(x)&=f_{n,k}(x)-(n+1)x^{\frac{n-1}{2}}\cr
&=\displaystyle\sum_{j=0}^{\frac{n-1}{2}-1}\,\binom{n+1}{2j+1}\,x^j\cr
&\equiv 1 \pmod{p}.
\end{split}
\]
\end{rmk}

\begin{rmk}
In characteristic 2, $f_{n,k}(x)-x^{\frac{n}{2}}$ is a coterm polynomial over $\f_2$ if $n\geq 4$ is even and $n\neq 2^l$, where $l\in \mathbb{Z}^+$. Note that when $n=2^l$, where $l\in \mathbb{Z}^+$, we have $f_{n,k}(x)-x^{\frac{n}{2}}\equiv 1 \pmod{2}$.
\end{rmk}


\section{Acknowledgements}

The author would like to thank Sartaj Ul Hasan for drawing his attention to \cite{Massey-1964}. The author is also grateful to Boris Tsvelikhovsky for the valuable discussions in Section 2.


\end{document}